\numberwithin{equation}{section}
\numberwithin{figure}{section}
\theoremstyle{plain}
\newtheorem{assumption}{\protect\assumptionname}
\theoremstyle{plain}
\newtheorem{thm}{\protect\theoremname}
\theoremstyle{plain}
\newtheorem{lem}{\protect\lemmaname}
\theoremstyle{plain}
\newtheorem{prop}{\protect\propositionname}
\theoremstyle{remark}
\newtheorem{rem}{\protect\remarkname}
\theoremstyle{remark}
\newtheorem*{acknowledgement*}{\protect\acknowledgementname}
\providecommand{\acknowledgementname}{Acknowledgement}
\providecommand{\assumptionname}{Assumption}
\providecommand{\lemmaname}{Lemma}
\providecommand{\propositionname}{Proposition}
\providecommand{\remarkname}{Remark}
\providecommand{\theoremname}{Theorem}
\begin{document}
\title{Dimension-free Wasserstein contraction of nonlinear filters}
\author{Nick Whiteley\\
Institute for Statistical Science, School of Mathematics, University
of Bristol\\
and the Alan Turing Institute}
\maketitle
\begin{abstract}
For a class of partially observed diffusions, conditions are given
for the map from the initial condition of the signal to filtering
distribution to be contractive with respect to Wasserstein distances,
with rate which does not necessarily depend on the dimension of the
state-space. The main assumptions are that the signal has affine drift
and constant diffusion coefficient and that the likelihood functions
are log-concave. Ergodic and nonergodic signals are handled in a single
framework. Examples include linear-Gaussian, stochastic volatility,
neural spike-train and dynamic generalized linear models. For these
examples filter stability can be established without any assumptions
on the observations.
\end{abstract}

\section{Introduction\label{sec:Introduction}}

\subsection{Setting}

Let $(X_{t})_{t\in\mathbb{R}_{+}}$, called the \emph{signal} process,
be the solution of the stochastic differential equation:
\begin{equation}
\mathrm{d}X_{t}=(\alpha+\beta X_{t})\mathrm{d}t+\sigma\mathrm{d}B_{t},\label{eq:SDE_intro}
\end{equation}
where $\alpha\in\mathbb{R}^{p}$ and $\beta$ is a $p\times p$ matrix
of reals, $\sigma\geq0$ is a scalar, and $(B_{t})_{t\in\mathbb{R}_{+}}$
is $p$-dimensional Brownian motion. Let\emph{ observations} $(Y_{k})_{k\in\mathbb{N}_{0}}$
be each valued in a measurable space $(\mathbb{Y},\mathcal{Y})$,
conditionally independent given $(X_{t})_{t\in\mathbb{R}_{+}}$, and
such that the conditional probability that $Y_{k}$ lies in $A\in\mathcal{Y}$
given $(X_{t})_{t\in\mathbb{R}_{+}}$ is of the form $\int_{A}g_{k}(X_{k\Delta},y)\chi(\mathrm{d}y)$,
for a measure $\chi$ on $\mathcal{Y}$, a function $g_{k}:\mathbb{R}^{p}\times\mathbb{Y}\rightarrow(0,\infty)$
and a constant $\Delta>0$.

The \emph{filtering} distributions $\pi_{k}(x,y_{0:k},\cdot)$, $k\in\mathbb{N}_{0}$,
on the Borel sigma algebra $\mathcal{B}(\mathbb{R}^{p})$, associated
with an initial state $x$ and a realized observation sequence $(y_{k})_{k\in\mathbb{N}_{0}}$,
are defined by
\begin{equation}
\pi_{k}(x,y_{0:k},A)\coloneqq\frac{\mathbf{E}_{x}\left[\mathbf{1}_{A}(X_{k\Delta})\prod_{j=0}^{k}g_{j}(X_{j\Delta},y_{j})\right]}{\mathbf{E}_{x}\left[\prod_{j=0}^{k}g_{j}(X_{j\Delta},y_{j})\right]},\quad A\in\mathcal{B}(\mathbb{R}^{p}),\label{eq:filter_defn_k}
\end{equation}
where $\mathbf{E}_{x}$ denotes expectation with respect to the law
of the solution of (\ref{eq:SDE_intro}) with $X_{0}=x$. When $(y_{0},\ldots,y_{k})$
are replaced in (\ref{eq:filter_defn_k}) by the random variables
$(Y_{0},\ldots,Y_{k})$ distributed according to the above prescription
and with true initialization also $X_{0}=x$, then $\pi_{k}(x,y_{0:k},\cdot)$
is a version of the conditional distribution of $X_{k\Delta}$ given
$(Y_{0},\ldots,Y_{k})$. It shall be assumed throughout that whichever
$x$ and $(y_{k})_{k\in\mathbb{N}_{0}}$ we consider, the denominator
in (\ref{eq:filter_defn_k}) is finite for each $k$, which combined
with $g_{k}(x,y)>0$ implies that $\pi_{k}(x,y_{0:k},\cdot)$ is well
defined as a probability measure.

The filtering problem \textendash{} computing or approximating the
distributions (\ref{eq:filter_defn_k}) \textendash{} appears across
Bayesian statistics, machine learning and signal processing \cite{harrison1999bayesian,murphy2012machine,douc2014nonlinear}
and a broard literature on its mathematical analysis has developed
\cite{crisan2011oxford}. The question of under what conditions the
filtering distributions are stable with respect to their initial condition
has a rich history and has been addressed using a wide variety of
techniques, an overview of the field is given in \cite[Chap. 4]{crisan2011oxford}.

Relatively recent results from \cite{kleptsyna2008discrete,Douc2009,douc2010forgetting,gerber2017stability}
are applicable to the model class described above, or some instances
thereof, under appropriate technical conditions. They establish quantitative
bounds on the total variation distance, or a weighted version thereof
in \cite{gerber2017stability}, between differently initialized filtering
distributions and obtain rate estimates which depend on constants
associated with minorization-type conditions for the signal process.
However such constants, and therefore the rate estimates obtained
from them, typically degrade with the dimension of the state-space.
The emphasis of the present work is on identifying techniques and
assumptions which allow this issue to be overcome.

Also recently, infinite-dimensional filtering has been treated in
\cite{tong2014conditional}, where stability results are obtained
involving weak convergence and the notion of local ergodicity, which
pertains to the mixing properties of non-Markovian, finite-dimensional
components of an infinite dimensional signal process, conditional
on the observations. The results hold under very mild conditions which
cannot be expected to yield a particular rate of convergence. As part
of a study of particle filters for signals with certain spatio-temporal
mixing properties, \cite{rebeschini2015can} uses the Dobrushin comparison
theorem to obtain quantitative filter stability results with respect
to local variation norms, which do not degrade with dimension. 

\subsection{Outline of the approach\label{subsec:Outline}}

The approach taken here does not rely on spatial structure of the
model, but is instead connected with contraction properties of gradient
flows and convexity, and influenced by analyses of Markov processes
using abstract ideas of curvature and underlying links to functional
inequalities \cite{bakry2013analysis,catt2014}. The proofs ultimately
rely on a quite simple coupling technique and the pathwise stability
properties of diffusions whose drifts involve the gradients of certain
convex potentials. This convexity arises from a combination of two
features of the model we consider: firstly a log-concavity-preservation
characteristic of the signal model (\ref{eq:SDE_intro}), and secondly
log-concavity of the likelihood functions $x\mapsto g_{k}(x,y)$ (precise
assumptions are stated later).

Regarding the first feature, it is known that the transition kernels
$(P_{t})_{t\in\mathbb{R}_{+}}$ associated with (\ref{eq:SDE_intro})
preserve log-concavity, meaning that for any log-concave function
$f$ and $t>0$, $P_{t}f$ is log-concave, see for example \cite{kolesnikov2001diffusion}.
If for each $k$ and $y$ the likelihood function $x\mapsto g_{k}(x,y)$
is log-concave, then the Markov property of $(X_{t})_{t\in\mathbb{R}_{+}}$
and the fact that a pointwise product of log-concave functions is
log-concave imply that the function $x\mapsto\mathbf{E}_{x}\left[\prod_{i=j}^{k}g_{i}(X_{(i-j)\Delta},y_{i})\right]$
is log-concave for any $y_{j},\ldots,y_{k}$. Functions of this form
play an important role in filter stability because they provide the
re-weighting of transition probabilities which corresponds to conditioning
on observations, and this is where the convex potentials alluded to
earlier arise.

It is important to note that log-concavity of $x\mapsto\mathbf{E}_{x}\left[\prod_{i=j}^{k}g_{i}(X_{(i-j)\Delta},y_{i})\right]$
cannot be expected in much greater generality. It was established
in \cite{kolesnikov2001diffusion} that among all diffusions of the
form:
\[
\mathrm{d}X_{t}=b(X_{t})\mathrm{d}t+\sigma(X_{t})\mathrm{d}B_{t},
\]
with $b(\cdot)$, $\sigma(\cdot)$ satisfying some mild regularity
conditions, it is only in the case that $b(\cdot)$ is affine and
$\sigma(\cdot)$ is a constant that $P_{t}f$ is log-concave for all
log-concave $f$. This motivates our focus on signal processes of
the form (\ref{eq:SDE_intro}).

Having emphasized the central role of convexity in the present work,
let us finally note the results presented here complement those of
\cite{stannat}, who studied filter stability for a class of diffusions
which are linearly observed in continuous time:
\begin{align}
\mathrm{d}X_{t} & =\beta X_{t}\mathrm{d}t+CC^{T}\nabla\log\phi(X_{t})\mathrm{d}t+C\mathrm{d}B_{t},\label{eq:stannat1}\\
\mathrm{d}Y_{t} & =GX_{t}\mathrm{d}t+\Gamma\mathrm{d}W_{t},\label{eq:stannat2}
\end{align}
where $\beta,C,G,\Gamma$ are matrices of appropriate size, $W_{t}$
is $p$-dimensional Brownian motion and $R\coloneqq\Gamma\Gamma^{T}$
is invertible. Stability is proved in \cite{stannat} via certain
diffusion contraction estimates with respect to Lipschitz norms, under
the condition that 
\[
x\mapsto V(x)\coloneqq\left\langle \beta x,\nabla\log\phi(x)\right\rangle +\frac{1}{2}\frac{\mathrm{tr}(Q\phi^{\prime\prime}(x))}{\phi(x)}+\frac{1}{2}\left\langle R^{-1}Gx,Gx\right\rangle 
\]
is uniformly strictly convex. A rate of convergence in total variation
distance is obtained in terms of the spectrum of the solution of a
particular matrix Riccati equation. The setup (\ref{eq:stannat1})-(\ref{eq:stannat2})
is a counterpart to the one considered in the present paper: in (\ref{eq:stannat1})-(\ref{eq:stannat2})
the linearity is in the observation model, where as in (\ref{eq:SDE_intro})
the linearity is in the signal and our discrete-time observations
$(Y_{k})_{k\in\mathbb{N}_{0}}$ may be related to the signal in a
nonlinear way.

\subsection{Notation and conventions}

The Euclidean norm and inner-product on $\mathbb{R}^{p}$ are denoted
$\|\cdot\|$ and $\left\langle \cdot,\cdot\right\rangle $. A function
$f:\mathbb{R}^{p}\to(0,\infty)$ is called log-concave if 
\[
\log f(cu+(1-c)v)\geq c\log f(u)+(1-c)\log f(v),\qquad\forall\,u,v\in\mathbb{R}^{p},\,c\in[0,1],
\]
and strongly log-concave if there exists a log-concave function $\tilde{f}$
and a constant $\lambda_{f}\in(0,\infty)$ such that $f(u)=\exp(-\frac{\lambda_{f}}{2}\|u\|^{2})\tilde{f}(u)$.
For a measure $\mu$, function $f$ and integral kernel $K$, we shall
write $\mu f=\int f(u)\mu(\mathrm{d}u)$, $\mu K(\cdot)=\int\mu(\mathrm{d}u)K(u,\cdot)$,
$Kf(u)=\int f(v)K(u,\mathrm{d}v)$. For a nonnegative function $f$,
$\mu\cdot f$ denotes the measure $\mu(\mathrm{d}u)f(u)$. The gradient
and Laplace operators with respect to $x$ are denoted $\nabla_{x}$
and $\nabla_{x}^{2}$ . The indicator function on a set $A$ is denoted
$\mathbf{1}_{A}$. The class of real-valued and twice continuously
differentiable functions on $\mathbb{R}^{p}$ is denoted $C^{2}$. 

The order-$q$ Wasserstein distance between probability measures on
$\mathcal{B}(\mathbb{R}^{p})$ is:
\[
W_{q}(\mu,\nu)\coloneqq\left(\inf_{\gamma\in\Gamma(\mu,\nu)}\int_{\mathbb{R}^{p}\times\mathbb{R}^{p}}\|u-v\|^{q}\gamma(\mathrm{d}u,\mathrm{d}v)\right)^{1/q},
\]
where $\Gamma(\mu,\nu)$ is the set of all couplings of $\mu$ and
$\nu$.

\section{Wasserstein distance between filtering distributions initialized
at points}

\subsection{Main result}
\begin{assumption}
\label{assu:g} For each $k\in\mathbb{N}_{0}$ and $y\in\mathbb{Y}$,
$x\mapsto g_{k}(x,y)$ is strictly positive, a member of $C^{2}$,
and there exists $\lambda_{g}(k,y)\in[0,\infty)$ and a log-concave
function $\tilde{g}_{k}(\cdot,y):\mathbb{R}^{p}\rightarrow(0,\infty)$
such that $g_{k}(x,y)=\exp\left[-\frac{\lambda_{g}(k,y)}{2}\|x\|^{2}\right]\tilde{g}_{k}(x,y)$.
\end{assumption}
\begin{thm}
\label{thm:singular}If assumption \ref{assu:g} holds, then for any
$q\geq1$, $k\geq1$ and $y_{0},\ldots,y_{k}\in\mathbb{Y}$,
\begin{equation}
W_{q}(\pi_{k}(x,y_{0:k},\cdot),\pi_{k}(x^{\prime},y_{0:k},\cdot))\leq\exp\left[-\sum_{j=1}^{k}\int_{0}^{\Delta}\lambda(j,y_{j},t)\mathrm{d}t\right]\|x-x^{\prime}\|,\qquad\forall x,x^{\prime}\in\mathbb{R}^{p},\label{eq:thm_1_bound}
\end{equation}
where 
\begin{equation}
\lambda(j,y,t)\coloneqq\lambda_{\mathrm{sig}}+\frac{\sigma^{2}\lambda_{g}(j,y)\lambda_{\beta}^{\mathrm{min}}(\Delta-t)}{1+\sigma^{2}\lambda_{g}(j,y)\int_{t}^{\Delta}\lambda_{\beta}^{\mathrm{max}}(\Delta-s)\mathrm{d}s},\label{eq:lambda_thm1}
\end{equation}
$\lambda_{\mathrm{sig}}\in\mathbb{R}$ is the minimum eigenvalue of
$-(\beta+\beta^{T})/2$ and $\lambda_{\beta}^{\mathrm{min}}(t),\lambda_{\beta}^{\mathrm{max}}(t)\in(0,\infty)$
are respectively the minimum and maximum eigenvalues of $e^{\beta t}(e^{\beta t})^{T}$.
\end{thm}

\subsection{Proof of theorem \ref{thm:singular}\label{subsec:Proof-of-theorem}}

Let $(y_{k})_{k\in\mathbb{N}_{0}}$ be an arbitrary sequence in $\mathbb{Y}$.
This sequence will remain fixed throughout sections \ref{subsec:Proof-of-theorem}-\ref{subsec:Quantifying-log-concavity-of}.
To avoid cumbersome formulae, the dependence of some quantities on
this sequence $(y_{k})_{k\in\mathbb{N}_{0}}$ will not be shown in
the notation, but in some places will be commented on in the text
for avoidance of doubt.

Fix $k\geq0$ and define 
\begin{align}
 & \varphi_{k,k}(x)\coloneqq g_{k}(x,y_{k}),\label{eq:phi_k_defn}\\
 & \varphi_{j,k}(x)\coloneqq g_{j}(x,y_{j})P_{\Delta}\varphi_{j+1,k}(x),\quad0\leq j<k,\label{eq:phi_j_defn}\\
 & R_{j,k}(x,A)\coloneqq\frac{\int_{A}P_{\Delta}(x,\mathrm{d}x^{\prime})\varphi_{j,k}(x^{\prime})}{P_{\Delta}\varphi_{j,k}(x)},\quad1\leq j\leq k.\nonumber 
\end{align}
The dependence of $\varphi_{j,k}$ and $R_{j,k}$ on $y_{j},\ldots,y_{k}$
is not shown in the notation. Here the presentation is heavily influenced
by the semigroup formulation of \cite{del2004feynman}.

We will need the following preliminary results concerning log-concave
functions.
\begin{lem}
\label{lem:strong_log_concavity}For any given $f:\mathbb{R}^{p}\to(0,\infty)$
which is a member of $C^{1}$ and $\lambda_{f}\geq0$, conditions
1)-3) are equivalent:

1) There exists a log-concave function $\tilde{f}$ such that $f(u)=\exp\left(-\frac{\lambda_{f}}{2}\|u\|^{2}\right)\tilde{f}(u)$,
$\forall u\in\mathbb{R}^{p}$.

2) $\log f(u)\leq\log f(v)+\left\langle \nabla\log f(v),u-v\right\rangle -\frac{\lambda_{f}}{2}\|u-v\|^{2},$
$\forall u,v\in\mathbb{R}^{p}$.

3) $\left\langle \nabla\log f(u)-\nabla\log f(v),u-v\right\rangle \leq-\lambda_{f}\|u-v\|^{2},$
$\forall u,v\in\mathbb{R}^{p}$.
\end{lem}
\begin{proof}
These equivalences are immediate consequences of elementary properties
of strongly convex $C^{1}$ functions, see for example \cite[Sec 2.1.3]{nesterov2018lectures}.
\end{proof}
\begin{lem}
\label{lem:preservation_basic}For every log-concave $f$ and $t>0$,
$P_{t}f$ is log-concave.
\end{lem}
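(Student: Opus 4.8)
The plan is to exploit the fact that, because the drift in (\ref{eq:SDE_intro}) is affine and $\sigma$ is a scalar, the transition kernel $P_{t}(\theta,\cdot)$ is Gaussian with a mean that is affine in $\theta$ and a covariance that does not depend on $\theta$. Solving (\ref{eq:SDE_intro}) by variation of constants gives $\theta_{t}=e^{\beta t}\theta+\int_{0}^{t}e^{\beta(t-s)}\alpha\,\mathrm{d}s+\sigma\int_{0}^{t}e^{\beta(t-s)}\,\mathrm{d}B_{s}$, so that under $P_{t}(\theta,\cdot)$ the law of $\theta_{t}$ is Gaussian with mean $m_{t}(\theta)=e^{\beta t}\theta+\int_{0}^{t}e^{\beta(t-s)}\alpha\,\mathrm{d}s$ and covariance $\Sigma_{t}=\sigma^{2}\int_{0}^{t}e^{\beta u}(e^{\beta u})^{T}\,\mathrm{d}u$. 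For $\sigma>0$ the matrix $\Sigma_{t}$ is positive definite for every $t>0$, since the integrand is positive definite and $e^{\beta u}$ is invertible.

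First I would treat the nondegenerate case $\sigma>0$, where $P_{t}$ has the density $p_{t}(\theta,v)=(2\pi)^{-p/2}(\det\Sigma_{t})^{-1/2}\exp[-\tfrac{1}{2}(v-m_{t}(\theta))^{T}\Sigma_{t}^{-1}(v-m_{t}(\theta))]$. The central observation is that, because $m_{t}(\theta)$ is affine in $\theta$, the map $(\theta,v)\mapsto(v-m_{t}(\theta))^{T}\Sigma_{t}^{-1}(v-m_{t}(\theta))$ is a nonnegative quadratic form on $\mathbb{R}^{p}\times\mathbb{R}^{p}$, hence convex; consequently $(\theta,v)\mapsto p_{t}(\theta,v)$ is log-concave jointly in $(\theta,v)$. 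Since $f$ is log-concave and does not depend on $\theta$, the function $(\theta,v)\mapsto f(v)$ is also jointly log-concave, and a pointwise product of jointly log-concave functions is jointly log-concave. Therefore $(\theta,v)\mapsto f(v)p_{t}(\theta,v)$ is log-concave on $\mathbb{R}^{p}\times\mathbb{R}^{p}$.

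Next I would integrate out $v$ and invoke Pr\'ekopa's theorem, which asserts that the marginal of a log-concave function obtained by integrating over a subset of the coordinates is again log-concave. Applying this to $P_{t}f(\theta)=\int_{\mathbb{R}^{p}}f(v)p_{t}(\theta,v)\,\mathrm{d}v$ yields that $\theta\mapsto P_{t}f(\theta)$ is log-concave, which is the claim. Finally, the degenerate case $\sigma=0$ is immediate: then $P_{t}(\theta,\cdot)=\delta_{m_{t}(\theta)}$ and $P_{t}f(\theta)=f(m_{t}(\theta))$ is the composition of the log-concave $f$ with the affine map $\theta\mapsto m_{t}(\theta)$, which is log-concave.

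The main obstacle is not conceptual but technical: verifying that Pr\'ekopa's marginalization theorem applies requires checking that $v\mapsto f(v)p_{t}(\theta,v)$ is integrable and that $P_{t}f$ is finite and strictly positive, so that $\log P_{t}f$ is well defined in the sense used here. Finiteness follows from a mild growth condition on $f$ against the Gaussian weight, and strict positivity follows from the full support of the Gaussian kernel when $\sigma>0$; the whole argument is otherwise a routine combination of joint log-concavity and marginalization.
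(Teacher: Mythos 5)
Your proof is correct and is essentially the argument the paper relies on: the paper itself only cites \cite[proof of Prop.~1.3]{kolesnikov2001diffusion} for this lemma, and the Gaussian-kernel-plus-Pr\'ekopa marginalization you describe is exactly the mechanism the paper deploys (in quantitative form, via Lemma \ref{lem:Prekopa}) in the proof of Proposition \ref{prop:quantitative log-concave}. The only caveat you raise is not really an obstacle: since the paper's log-concave functions are finite and strictly positive on all of $\mathbb{R}^{p}$, they are bounded above by an exponential (a finite convex potential admits a supporting affine minorant), so integrability against the Gaussian kernel and strict positivity of $P_{t}f$ are automatic rather than extra hypotheses.
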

\begin{proof}
\cite[proof of Prop. 1.3]{kolesnikov2001diffusion}
\end{proof}
\begin{lem}
\label{lem:varphi_log_concave}We have
\begin{equation}
\pi_{k}(x,y_{0:k},A)=R_{1,k}R_{2,k}\cdots R_{k,k}(x,A).\label{eq:pi_theta_R}
\end{equation}
If assumption \ref{assu:g} holds, then for each $j,k$ such that
$0\leq j\leq k$, there exists a log-concave function $x\mapsto\tilde{\varphi}_{j,k}(x)$,
depending on $y_{j},\ldots,y_{k}$, such that:
\begin{equation}
\varphi_{j,k}(x)=\exp\left[-\frac{\lambda_{g}(j,y_{j})}{2}\|x\|^{2}\right]\tilde{\varphi}_{j,k}(x).\label{eq:varphi_strongly_log_concave}
\end{equation}
\end{lem}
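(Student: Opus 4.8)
The plan is to establish the two claims of Lemma~\ref{lem:varphi_log_concave} separately. The factorization \eqref{eq:pi_theta_R} is essentially a restatement of the standard recursive (Bayes/prediction--update) structure of filtering, so I would verify it by induction on $k$, or more directly by unfolding the definitions \eqref{eq:phi_k_defn}--\eqref{eq:phi_j_defn} and the kernels $R_{j,k}$ against the explicit formula \eqref{eq:filter_defn_k}. The key observation is that the normalizing denominators telescope: the denominator $P_{\Delta}\varphi_{j,k}(\theta)$ appearing in $R_{j,k}$ cancels against the numerator-factor $\varphi_{j,k}(\vartheta)$ produced by the preceding kernel $R_{j-1,k}$, so that applying $R_{1,k}\cdots R_{k,k}$ to an indicator $\mathbf{1}_A$ reproduces exactly the ratio $\mathbf{E}_{\theta}[\mathbf{1}_A(\theta_{k\Delta})\prod_{j=0}^k g_j]/\mathbf{E}_{\theta}[\prod_{j=0}^k g_j]$, using the Markov property of $(\theta_t)$ to write $P_\Delta$-iterates as the conditional expectation. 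I expect this part to be routine bookkeeping once the cancellation pattern is identified.

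For the strong log-concavity claim \eqref{eq:varphi_strongly_log_concave} I would argue by backward induction on $j$, from $j=k$ down to $j=0$. The base case $j=k$ is immediate from Assumption~\ref{assu:g}, which gives $\varphi_{k,k}(\theta)=g_k(\theta,y_k)=\exp[-\tfrac{\lambda_g(k)}{2}\theta^T\theta]\tilde g_k(\theta)$ with $\tilde g_k$ log-concave. For the inductive step I would assume $\varphi_{j+1,k}(\theta)=\exp[-\tfrac{\lambda_g(j+1)}{2}\theta^T\theta]\tilde\varphi_{j+1,k}(\theta)$ with $\tilde\varphi_{j+1,k}$ log-concave, and then show that $P_\Delta\varphi_{j+1,k}$ is log-concave. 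The cleanest route is to note that $\varphi_{j+1,k}$ is a product of a Gaussian factor and a log-concave factor, and a Gaussian is itself log-concave, so $\varphi_{j+1,k}$ is itself log-concave; Lemma~\ref{lem:preservation_basic} then gives that $P_\Delta\varphi_{j+1,k}$ is log-concave. Finally, by \eqref{eq:phi_j_defn} and Assumption~\ref{assu:g},
\[
\varphi_{j,k}(\theta)=g_j(\theta,y_j)\,P_\Delta\varphi_{j+1,k}(\theta)=\exp\!\left[-\tfrac{\lambda_g(j)}{2}\theta^T\theta\right]\tilde g_j(\theta)\,P_\Delta\varphi_{j+1,k}(\theta),
\]
and setting $\tilde\varphi_{j,k}\coloneqq\tilde g_j\cdot P_\Delta\varphi_{j+1,k}$ gives the desired form, since a pointwise product of log-concave functions is log-concave.

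The main subtlety I anticipate is not the convexity algebra but the interplay between the two log-concavity parameters $\lambda_g(j)$ and $\lambda_g(j+1)$ across the induction. In the step above I only needed $\varphi_{j+1,k}$ to be \emph{plainly} log-concave (absorbing its Gaussian factor into the log-concave part) rather than tracking its $\lambda_g(j+1)$-strong log-concavity, because Lemma~\ref{lem:preservation_basic} preserves only log-concavity and in general will not preserve the strength constant under $P_\Delta$. The strong log-concavity of $\varphi_{j,k}$ is then reintroduced solely through the fresh Gaussian factor coming from $g_j$ in \eqref{eq:phi_j_defn}, which is why the statement pairs $\varphi_{j,k}$ with parameter $\lambda_g(j)$ rather than with any accumulated constant. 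I would make sure the decomposition is stated so that the Gaussian factor in each $\varphi_{j,k}$ is exactly the one supplied by $g_j$, and that all integrability needed to apply $P_\Delta$ is covered by the standing finiteness assumption on the denominator in \eqref{eq:filter_defn_k}.
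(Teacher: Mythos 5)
Your proposal is correct and follows essentially the same route as the paper: the factorization \eqref{eq:pi_theta_R} from the Markov property with the telescoping cancellation of the $P_{\Delta}\varphi_{j,k}$ normalizers, and the decomposition \eqref{eq:varphi_strongly_log_concave} by backward induction using Lemma~\ref{lem:preservation_basic} together with closure of log-concavity under pointwise products, taking $\tilde{\varphi}_{j,k}=\tilde{g}_{j}\cdot P_{\Delta}\varphi_{j+1,k}$. Your remark that only plain log-concavity of $\varphi_{j+1,k}$ is propagated through $P_{\Delta}$, with the parameter $\lambda_{g}(j)$ supplied afresh by $g_{j}$, is exactly the point implicit in the paper's one-line argument.
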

\begin{proof}
The expression for $\pi_{k}(x,y_{0:k},A)$ follows from (\ref{eq:filter_defn_k})
and the Markov property of the signal, this key identify can be traced
back to \cite{del1999stability}. The second claim is established
using assumption \ref{assu:g}, repeated application to (\ref{eq:phi_k_defn})\textendash (\ref{eq:phi_j_defn})
of lemma \ref{lem:preservation_basic} and the fact that the pointwise
product of log-concave functions is log-concave.
\end{proof}
The main steps in the proof of theorem \ref{thm:singular} from hereon
are:
\begin{enumerate}
\item Lemma \ref{lem:transformation} in section \ref{subsec:A-space-time--transform}
establishes that each Markov kernel $R_{j,k}$ can be interpreted
in terms of the transition probabilities of an $h$-transform of the
signal process (\ref{subsec:A-space-time--transform}), where $h$
is function which depends on $y_{j},\ldots,y_{k}$ via $\varphi_{j,k}$.
This transformation amounts to the addition of an extra ``drift''
term to the extended space-time generator (defined below) associated
with the signal, where the extra term depends on $h$.
\item Proposition \ref{prop:wasser_R} in section \ref{subsec:A-space-time--transform}
bounds the Wasserstein distance between $R_{j,k}(x,\cdot)$ and $R_{j,k}(x^{\prime},\cdot)$
using a synchronous coupling of these $h$-transformed diffusions,
assuming log-concavity of the $h$-function in its spatial argument.
Specifically, in the proof of proposition \ref{lem:transformation}
the Wasserstein distance is bounded in terms of the Euclidean distance
between the paths of the coupled diffusions, which is in turn controlled
by $\lambda_{\mathrm{sig}}$ and the strength of the log-concavity
of the $h$-function. Roughly speaking, the stronger this log-concavity
is, the stronger the Wasserstein contraction of $R_{j,k}(x,\cdot)$
is.
\item Proposition \ref{prop:quantitative log-concave} in section \ref{subsec:Quantifying-log-concavity-of}
establishes that the $h$-function is indeed log-concave in its spatial
argument, and quantifies the strength of its log-concavity. In the
proof of proposition \ref{prop:quantitative log-concave} this log-concavity
is inherited from that of $\varphi_{j,k}$ as per (\ref{eq:varphi_strongly_log_concave}),
and it is from here that the constants $\lambda_{g}(k,y_{k})$ from
assumption \ref{assu:g} appear in the log-concavity of $h$ and hence
ultimately in the bounds of proposition \ref{prop:wasser_R}.
\item Finally, the bound on $W_{q}(\pi_{k}(x,y_{0:k},\cdot),\pi_{k}(x^{\prime},y_{0:k},\cdot))$
given in theorem \ref{thm:singular} is an immediate consequence of
proposition \ref{prop:wasser_R} combined with (\ref{eq:pi_theta_R}).
\end{enumerate}

\subsection{A space-time $h$-transform of the signal process\label{subsec:A-space-time--transform}}

Let $C([0,\Delta],\mathbb{R}^{p}\times[0,\Delta])$ be the space of
$\mathbb{R}^{p}\times[0,\Delta]$-valued, continuous functions on
$[0,\Delta]$ endowed with the supremum norm. Let $(X_{t},t)_{t\in[0,\Delta]}$
be the associated space-time coordinate process and let $\mathcal{F}=(\mathcal{F}_{t})_{t\in[0,\Delta]}$
be the filtration it generates. The extended generator (in the sense
of \cite[p. 285]{revuz2013continuous}) of the space-time process
on $C([0,\Delta],\mathbb{R}^{p}\times[0,\Delta])$ under the law associated
with (\ref{eq:SDE_intro}) and acting on functions $f$ on $\mathbb{R}^{p}\times\mathbb{R}_{+}$
is:

\[
Lf(x,t)\coloneqq\frac{\partial}{\partial t}f(x,t)+(\alpha+\beta x)^{T}\nabla_{x}f(x,t)+\frac{\sigma^{2}}{2}\nabla_{x}^{2}f(x,t).
\]

\begin{lem}
\label{lem:transformation}Let assumption \ref{assu:g} hold, fix
any $j,k$ such that $1\leq j\leq k$ and define
\begin{equation}
h(x,t)\coloneqq P_{\Delta-t}\varphi_{j,k}(x),\label{eq:h_defn}
\end{equation}
where the dependence of $h$ on $j,k$ and $y_{j},\ldots,y_{k}$ is
not shown in the notation. There exists a probability kernel $\mathbf{P}^{h}:\mathbb{R}^{p}\times\mathcal{F}_{\Delta}\to[0,1]$
such that for any $x_{0}\in\mathbb{R}^{p}$ and $A\in\mathcal{B}(\mathbb{R}^{p})$,
$R_{j,k}(x_{0},A)=\mathbf{P}^{h}(x_{0},\{X_{\Delta}\in A\})$, and
under $\mathbf{P}^{h}(x_{0},\cdot)$ the extended generator of the
space-time process $(X_{t},t)_{t\in[0,\Delta]}$ on $C([0,\Delta],\mathbb{R}^{p}\times[0,\Delta])$
is:
\begin{equation}
L^{h}f(x,t)\coloneqq Lf(x,t)+\sigma^{2}\nabla_{x}\log h(x,t)^{T}\nabla_{x}f(x,t).\label{eq:L^h}
\end{equation}
\end{lem}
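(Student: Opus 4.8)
The plan is to recognize this as a classical Doob $h$-transform construction, where the conditioning-on-observations reweighting is encoded by the space-time harmonic function $h$. First I would verify that $h(\theta,t)=P_{\Delta-t}\varphi_{j,k}(\theta)$ is space-time harmonic for $L$, i.e.\ that $Lh=0$ on $\mathbb{R}^{p}\times[0,\Delta)$. This is just the backward Kolmogorov equation: since $(P_{s})_{s\geq0}$ is the semigroup generated by (\ref{eq:SDE_intro}), the function $u(\theta,s)\coloneqq P_{s}\varphi_{j,k}(\theta)$ solves $\partial_{s}u=\mathcal{A}u$, where $\mathcal{A}f(\theta)\coloneqq(\alpha+\beta\theta)^{T}\nabla_{\theta}f+\tfrac{\sigma^{2}}{2}\nabla_{\theta}^{2}f$ is the spatial generator. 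Substituting $s=\Delta-t$ flips the sign of the time derivative and gives $\partial_{t}h+\mathcal{A}h=Lh=0$ exactly. For this to be legitimate I need $h>0$ (guaranteed since $g_{k}>0$ forces $\varphi_{j,k}>0$) and enough smoothness that $h$ lies in the domain of the extended generator; the affine-drift, constant-diffusion structure makes the semigroup explicitly Gaussian (cf.\ (\ref{eq:ar})), so $h$ is smooth and the regularity is routine to confirm.

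Next I would define $\mathbf{P}^{h}(\theta_{0},\cdot)$ via its Radon--Nikodym density against the law $\mathbf{P}_{\theta_{0}}$ of the unconditioned space-time process started at $(\theta_{0},0)$. Because $h$ is positive and space-time harmonic, the process $M_{t}\coloneqq h(\theta_{t},t)/h(\theta_{0},0)$ is a positive $\mathcal{F}$-martingale of mean one under $\mathbf{P}_{\theta_{0}}$, and I would set $\mathrm{d}\mathbf{P}^{h}(\theta_{0},\cdot)=M_{\Delta}\,\mathrm{d}\mathbf{P}_{\theta_{0}}$ on $\mathcal{F}_{\Delta}$. Then I would compute the one-dimensional marginal at time $\Delta$: for $A\in\mathcal{B}(\mathbb{R}^{p})$,
\[
\mathbf{P}^{h}(\theta_{0},\{\theta_{\Delta}\in A\})=\mathbf{E}_{\theta_{0}}\!\left[\mathbf{1}_{A}(\theta_{\Delta})\frac{h(\theta_{\Delta},\Delta)}{h(\theta_{0},0)}\right]=\frac{\int_{A}P_{\Delta}(\theta_{0},\mathrm{d}\vartheta)\,\varphi_{j,k}(\vartheta)}{P_{\Delta}\varphi_{j,k}(\theta_{0})},
\]
using $h(\theta_{\Delta},\Delta)=\varphi_{j,k}(\theta_{\Delta})$ and $h(\theta_{0},0)=P_{\Delta}\varphi_{j,k}(\theta_{0})$. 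The right-hand side is exactly $R_{j,k}(\theta_{0},A)$ by its definition, which establishes the marginal identity.

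Finally I would identify the generator under the tilted law. The standard Girsanov/$h$-transform computation shows that changing measure by the harmonic martingale $M_{t}$ adds to the generator the carr\'e-du-champ term $\Gamma(\log h,f)$; for the diffusion (\ref{eq:SDE_intro}) with constant diffusion coefficient $\sigma^{2}$ this carr\'e-du-champ is $\sigma^{2}\nabla_{\theta}\log h(\theta,t)^{T}\nabla_{\theta}f(\theta,t)$, yielding $L^{h}f=Lf+\sigma^{2}\nabla_{\theta}\log h^{T}\nabla_{\theta}f$ as claimed in (\ref{eq:L^h}). Concretely, under $\mathbf{P}^{h}$ the drift of $(\theta_{t})$ acquires the extra term $\sigma^{2}\nabla_{\theta}\log h(\theta_{t},t)$, which is the Doob-transformed drift; this is most cleanly derived by applying It\^o's formula to $\log M_{t}$ and invoking Girsanov's theorem to read off the new drift. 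The main obstacle I anticipate is technical rather than conceptual: ensuring that $M_{t}$ is a genuine (uniformly integrable) martingale and that the change of measure is justified on the path space, together with confirming that the transformed process lies in the domain of the extended generator in the precise sense of \cite[p.~285]{revuz2013continuous}. Given the explicit Gaussian semigroup and the positivity and smoothness of $h$, these integrability and domain conditions should follow from standard estimates, but they are the points requiring care.
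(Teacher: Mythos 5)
Your proposal is correct and follows essentially the same route as the paper: both construct $\mathbf{P}^{h}$ by tilting the path law with the mean-one martingale $h(\theta_{t},t)/h(\theta_{0},0)$, read off $R_{j,k}$ as the time-$\Delta$ marginal, and identify the generator using space-time harmonicity of $h$ (the paper via $L^{h}f=h^{-1}L(hf)$ citing Revuz--Yor, you via the equivalent carr\'e-du-champ/Girsanov computation). The regularity points you flag are handled in the paper by noting that $\varphi_{j,k}$ is strictly positive, $C^{2}$, and by log-concavity grows at most like $e^{c\|\theta\|}$, which is the same justification you anticipate.
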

\begin{proof}
Let $\mathbf{P}:\mathbb{R}^{p}\times\mathcal{F}_{\Delta}\to[0,1]$
be a probability kernel such that $\mathbf{P}(x_{0},\cdot)$ is the
law of the space-time process associated with (\ref{eq:SDE_intro})
on the time horizon $[0,\Delta]$ initialized from the point $(x_{0},0)$.

Note the following three properties of $x\mapsto\varphi_{j,k}(x)$:
i) Under assumption \ref{assu:g}, for all $k\geq0$, $x\mapsto g_{k}(x,y_{k})$
is strictly positive and therefore so is $x\mapsto\varphi_{j,k}(x)$
for all $j\leq k$. ii) Under assumption \ref{assu:g} for all $k\geq0$,
$x\mapsto g_{k}(x,y_{k})$ is a member of $C^{2}$, and combined
with (\ref{eq:phi_k_defn})-(\ref{eq:phi_j_defn}) and the fact that
the solution of (\ref{eq:SDE_intro}) satisfies $X_{(k+1)\Delta}=a+BX_{k\Delta}+\sigma\xi_{k+1}$
where $\xi_{k+1}=e^{\Delta\beta}\int_{k\Delta}^{(k+1)\Delta}e^{-(t-k\Delta)\beta}\mathrm{d}B_{t}$
is a Gaussian random variable and $a=e^{\Delta\beta}\int_{0}^{\Delta}e^{-t\beta}\alpha\mathrm{d}t$,
$B=e^{\Delta\beta}$ , this implies $x\mapsto\varphi_{j,k}(x)$ is
a member of $C^{2}$ for all $j\leq k$. iii) By (\ref{eq:varphi_strongly_log_concave})
in lemma \ref{lem:varphi_log_concave} and the equivalence between
conditions 1) and 2) in lemma \ref{lem:strong_log_concavity} with
$f$ there taken to be $\varphi_{j,k}$, we have $\log\varphi_{j,k}(x)\leq\log\varphi_{j,k}(0)+\nabla_{x}\log\varphi_{j,k}(0)^{T}x-\frac{\lambda_{g}(j,y_{j})}{2}\|x\|^{2}$,
hence $\varphi_{j,k}(x)$ grows no faster than $e^{c\|x\|}$ as $\|x\|\to\infty$
where $c=\|\nabla_{x}\log\varphi_{j,k}(0)\|$.

In the remainder of the proof of the lemma, $j,k$ are fixed as in
the statement lemma, and the dependence of various quantities on $j,k$
and $y_{j},\ldots,y_{k}$ is not shown in the notation. Appealing
to the properties of $x\mapsto\varphi_{j,k}(x)$ which have just been
stated, $x\mapsto h(x,t)$ is strictly positive, a member of $C^{2}$,
and log-concave by lemma \ref{lem:varphi_log_concave} and lemma \ref{lem:preservation_basic}.
 With:
\[
D_{t}\coloneqq\frac{h(X_{t},t)}{h(x_{0},0)},
\]
$(D_{t})_{t\in[0,\Delta]}$ is a $(\mathcal{F}_{t},\mathbf{P}(x_{0},\cdot))$-continuous
martingale, and the expected value of $D_{t}$ under $\mathbf{P}(x_{0},\cdot)$
is $1$. Now define the probability kernel $\mathbf{P}^{h}(x,\cdot)\coloneqq D_{\Delta}\cdot\mathbf{P}(x,\cdot)$.
Note that $\mathbf{P}^{h}$ depends on $j,k$ and $y_{j},\ldots,y_{k}$
via $h$. Moreover under $\mathbf{P}^{h}(x_{0},\cdot)$, $(X_{t})_{t\in[0,\Delta]}$
is an inhomogeneous Markov process with transition probabilities:
\[
P_{s,t}^{h}(x,\mathrm{d}x^{\prime})\coloneqq\frac{P_{t-s}(x,\mathrm{d}x^{\prime})h(x^{\prime},t)}{h(x,s)},
\]
and $R_{j,k}(x,A)=P_{0,\Delta}^{h}(x,A)=\mathbf{P}^{h}(x,\{X_{\Delta}\in A\})$.
By \cite[Prop. 3.9, p.357]{revuz2013continuous}, the extended generator
of the space-time process under $\mathbf{P}^{h}(x_{0},\cdot)$ is
$h^{-1}L(hf)$. Using the fact that $\int P_{s}(x,\mathrm{d}x^{\prime})h(x^{\prime},s+t)=h(x,t)$
we have $L(h)=0$, and combining this observation with elementary
differential calculus manipulations it can be checked that $h^{-1}L(hf)$
is equal to the right hand side of (\ref{eq:L^h}). 
\end{proof}
Before stating the following proposition, we emphasize once again
that $(y_{k})_{k\in\mathbb{N}_{0}}$ are fixed.
\begin{prop}
\label{prop:wasser_R}Fix any $j,k$ such that $1\leq j\leq k$. If
there exists a continuous function $\lambda_{h}:[0,\Delta]\to[0,\infty)$
and a function $\tilde{h}:\mathbb{R}^{p}\times[0,\Delta]\to(0,\infty)$
such that for each $t$, $x\mapsto\tilde{h}(x,t)$ is log-concave
and $h$ as in lemma \ref{lem:transformation} satisfies $h(x,t)=\exp\left[-\frac{\lambda_{h}(t)}{2}\|x\|^{2}\right]\tilde{h}(x,t)$,
then for any $q\geq1$,
\[
W_{q}(R_{j,k}(x,\cdot),R_{j,k}(x^{\prime},\cdot))\leq\exp\left[-\lambda_{\mathrm{sig}}\Delta-\sigma^{2}\int_{0}^{\Delta}\lambda_{h}(t)\mathrm{d}t\right]\|x-x^{\prime}\|.
\]
\end{prop}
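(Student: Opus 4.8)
The plan is to couple two copies of the $h$-process from Lemma~\ref{lem:transformation}, one started at $\theta$ and one at $\vartheta$, using a synchronous coupling (same driving Brownian motion), and to show that the squared distance between the two trajectories contracts deterministically at the rate claimed. Under $\mathbf{P}^{h}$ the generator is $L^{h}f = Lf + \sigma^{2}(\nabla_{\theta}\log h)^{T}\nabla_{\theta}f$, so each copy solves an SDE of the form $\mathrm{d}\theta_{t} = (\alpha + \beta\theta_{t})\mathrm{d}t + \sigma^{2}\nabla_{\theta}\log h(\theta_{t},t)\,\mathrm{d}t + \sigma\,\mathrm{d}B_{t}$. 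First I would write down this SDE for both $\theta_{t}$ and $\vartheta_{t}$ driven by the \emph{same} $B_{t}$, so that the noise terms cancel in the difference $\delta_{t} \coloneqq \theta_{t} - \vartheta_{t}$, which therefore evolves by an ordinary differential equation with no stochastic part.

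Next I would compute $\frac{\mathrm{d}}{\mathrm{d}t}\|\delta_{t}\|^{2}$ by the chain rule. The drift difference has two pieces. The affine part contributes $2\,\delta_{t}^{T}\beta\,\delta_{t} = \delta_{t}^{T}(\beta+\beta^{T})\delta_{t} \leq -2\lambda_{\mathrm{sig}}\|\delta_{t}\|^{2}$ by the definition of $\lambda_{\mathrm{sig}}$ as the smallest eigenvalue of $-(\beta+\beta^{T})/2$. The $h$-dependent part contributes $2\sigma^{2}\,\delta_{t}^{T}\bigl(\nabla_{\theta}\log h(\theta_{t},t) - \nabla_{\theta}\log h(\vartheta_{t},t)\bigr)$, and here the log-concavity hypothesis enters: the decomposition $h(\theta,t) = \exp[-\tfrac{\lambda_{h}(t)}{2}\theta^{T}\theta]\tilde{h}(\theta,t)$ gives $\nabla_{\theta}\log h = -\lambda_{h}(t)\,\theta + \nabla_{\theta}\log\tilde{h}$. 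Since $\theta\mapsto\tilde{h}(\theta,t)$ is log-concave, $\log\tilde{h}$ is concave, so the monotonicity of the gradient of a concave function yields $\delta_{t}^{T}\bigl(\nabla_{\theta}\log\tilde{h}(\theta_{t},t)-\nabla_{\theta}\log\tilde{h}(\vartheta_{t},t)\bigr)\leq 0$. The quadratic piece gives exactly $-\lambda_{h}(t)\|\delta_{t}\|^{2}$. Collecting terms produces the differential inequality
\[
\frac{\mathrm{d}}{\mathrm{d}t}\|\delta_{t}\|^{2} \leq -2\bigl(\lambda_{\mathrm{sig}} + \sigma^{2}\lambda_{h}(t)\bigr)\|\delta_{t}\|^{2}.
\]

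Integrating by Gr\"onwall's inequality gives the pathwise bound $\|\theta_{\Delta}-\vartheta_{\Delta}\| \leq \exp[-\lambda_{\mathrm{sig}}\Delta - \sigma^{2}\int_{0}^{\Delta}\lambda_{h}(t)\,\mathrm{d}t]\,\|\theta-\vartheta\|$, which holds deterministically (for every realization of $B$) because $\delta_{t}$ is noise-free. Finally, since the joint law of $(\theta_{\Delta},\vartheta_{\Delta})$ under this synchronous coupling is a particular coupling of $R_{j,k}(\theta,\cdot)$ and $R_{j,k}(\vartheta,\cdot)$, and the bound on $\|\theta_{\Delta}-\vartheta_{\Delta}\|$ is deterministic, raising to the $q$-th power and taking expectation under the coupling immediately controls $W_{q}$ by the same constant, giving the claim for all $q\geq 1$.

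I expect the main obstacle to be the rigorous justification of the synchronous coupling and the chain-rule computation for the $h$-process, rather than the convexity estimate itself. One must verify that the drift $\sigma^{2}\nabla_{\theta}\log h$ is regular enough (locally Lipschitz, with the at-most-linear growth noted in the proof of Lemma~\ref{lem:transformation}) for the coupled SDE system to have a well-defined strong solution on $[0,\Delta]$, so that It\^o's formula applies to $\|\delta_{t}\|^{2}$ and the stochastic integral genuinely vanishes. Given the $C^{2}$ and log-concavity properties already established for $h$, this should be controllable, but it is the step requiring the most care; the convexity-driven dissipation argument is then the clean heart of the proof.
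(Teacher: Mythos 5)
Your proposal is correct and follows essentially the same route as the paper: the paper's proof uses exactly this synchronous coupling of the $h$-process SDEs, applies It\^o's formula to $\|\theta_t-\vartheta_t\|^2$ weighted by $e^{2\int_0^t(\lambda_{\mathrm{sig}}+\sigma^2\lambda_h(s))\mathrm{d}s}$, and invokes the same two monotonicity inequalities (the eigenvalue bound $u^T\beta u\leq-\lambda_{\mathrm{sig}}\|u\|^2$ and the strong log-concavity bound on $\nabla_\theta\log h$) before taking expectations and appealing to Lemma \ref{lem:transformation}. Your explicit unpacking of $\nabla_\theta\log h=-\lambda_h(t)\theta+\nabla_\theta\log\tilde h$ simply spells out the step the paper states as inequality (\ref{eq:h_slc}).
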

\begin{proof}
Consider the synchronous coupling:
\begin{align*}
 & X_{t}=x_{0}+\int_{0}^{t}\alpha+\beta X_{s}+\sigma^{2}\nabla_{x}\log h(X_{s},s)\mathrm{d}s+\sigma B_{t},\\
 & X_{t}^{\prime}=x_{0}^{\prime}+\int_{0}^{t}\alpha+\beta X_{s}^{\prime}+\sigma^{2}\nabla_{x}\log h(X_{s}^{\prime},s)\mathrm{d}s+\sigma B_{t}.
\end{align*}
By Ito's formula, for any continuous function $\zeta:[0,\Delta]\to\mathbb{R}$.
\begin{align}
 & \|X_{t}-X_{t}^{\prime}\|^{2}e^{2\int_{0}^{t}\zeta(s)\mathrm{ds}}\nonumber \\
 & \quad=\|x_{0}-x_{0}^{\prime}\|^{2}+2\int_{0}^{t}\left(\zeta(s)\|X_{s}-X_{s}^{\prime}\|^{2}+(X_{s}-X_{s}^{\prime})^{T}\beta(X_{s}-X_{s}^{\prime})\right)e^{2\int_{0}^{s}\zeta(u)\mathrm{d}u}\mathrm{d}s\nonumber \\
 & \quad+2\int_{0}^{t}\sigma^{2}(\nabla_{x}\log h(X_{s},s)-\nabla_{x}\log h(X_{s}^{\prime},s))^{T}(X_{s}-X_{s}^{\prime})e^{2\int_{0}^{s}\zeta(u)\mathrm{d}u}\mathrm{d}s.\label{eq:ito}
\end{align}
Now set $\zeta(s)=\lambda_{\mathrm{sig}}+\sigma^{2}\lambda_{h}(s)$.
For any skew-symmetric matrix, say $A$, and any $u\in\mathbb{R}^{p}$,
$u^{T}Au=(Au)^{T}u=u^{T}A^{T}u=-u^{T}Au$, hence $u^{T}Au=0$, so
\begin{equation}
u^{T}\beta u=\frac{1}{2}u^{T}(\beta+\beta^{T})u\leq-\lambda_{\mathrm{sig}}\|u\|^{2},\qquad\forall u\in\mathbb{R}^{p}.\label{eq:beta_Symm}
\end{equation}
The assumption of the lemma on $h$ combined with lemma \ref{lem:strong_log_concavity}
implies 
\begin{equation}
(\nabla_{x}\log h(x,s)-\nabla_{x}\log h(x^{\prime},s))^{T}(x-x^{\prime})\leq-\lambda_{h}(s)\|x-x^{\prime}\|^{2},\qquad x,x^{\prime}\in\mathbb{R}^{p}.\label{eq:h_slc}
\end{equation}
Applying (\ref{eq:beta_Symm}) and (\ref{eq:h_slc}) to (\ref{eq:ito})
gives:
\[
\|X_{\Delta}-X_{\Delta}^{\prime}\|\leq\exp\left(-\int_{0}^{\Delta}\lambda_{\mathrm{sig}}+\sigma^{2}\lambda_{h}(t)\mathrm{d}t\right)\|x_{0}-x_{0}^{\prime}\|.
\]
The proof is completed by taking expectations and applying lemma \ref{lem:transformation}.
\end{proof}

\subsection{Quantifying log-concavity of $x\protect\mapsto h(x,t)$\label{subsec:Quantifying-log-concavity-of}}

The main result of this section is proposition \ref{prop:quantitative log-concave},
which complements lemma \ref{lem:preservation_basic} by quantifying
the influence on the log-concavity of $x\mapsto h(x,t)$ of the parameters
of the signal process and the log-concavity of the likelihood functions,
and provides verification of the hypotheses of proposition \ref{prop:wasser_R}.
\begin{prop}
\label{prop:quantitative log-concave}Let assumption \ref{assu:g}
hold, fix $j,k$ such that $1\leq j\leq k$ and let $h$ be as in
lemma \ref{lem:transformation}. Then there exists a function $\tilde{h}:\mathbb{R}^{p}\times[0,\Delta]\to(0,\infty)$
such that $x\mapsto\tilde{h}(x,t)$ is log-concave and 
\[
h(x,t)=\exp\left[-\frac{\lambda_{h}(t)}{2}\|x\|^{2}\right]\tilde{h}(x,t),
\]
where 
\[
\lambda_{h}(t)\coloneqq\frac{\lambda_{g}(j,y_{j})\lambda_{\beta}^{\mathrm{min}}(\Delta-t)}{1+\sigma^{2}\lambda_{g}(j,y_{j})\int_{t}^{\Delta}\lambda_{\beta}^{\mathrm{max}}(\Delta-s)\mathrm{d}s},
\]
and $\lambda_{\beta}^{\mathrm{min}}(t),\lambda_{\beta}^{\mathrm{max}}(t)$
are respectively the minimum and maximum eigenvalues of $e^{\beta t}(e^{\beta t})^{T}$.
\end{prop}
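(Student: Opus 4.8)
The plan is to exploit the explicit Gaussian form of the one-step transition kernel $P_{\Delta-t}$ together with the strong log-concavity of $\varphi_{j,k}$ recorded in Lemma \ref{lem:varphi_log_concave}, thereby reducing the statement to a finite-dimensional Gaussian marginalization. Writing $s=\Delta-t$, equation (\ref{eq:ar}) shows that $P_s(\theta,\cdot)$ is the Gaussian law with mean $M\theta+a_s$ and covariance $\Sigma_s=\sigma^2\int_0^s e^{\beta u}(e^{\beta u})^T\mathrm{d}u$, where $M=e^{\beta s}$ and $a_s$ is a constant vector, and $\Sigma_s$ is positive definite whenever $\sigma>0$ and $s>0$. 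Using the decomposition $\varphi_{j,k}(v)=\exp[-\tfrac{\lambda_g(j)}{2}v^Tv]\tilde\varphi_{j,k}(v)$ from Lemma \ref{lem:varphi_log_concave}, I would write $h(\theta,t)=P_s\varphi_{j,k}(\theta)$ as an integral over $v$ of $\tilde\varphi_{j,k}(v)$ against the product of the Gaussian density with the factor $\exp[-\tfrac{\lambda_g(j)}{2}v^Tv]$.

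The central computation is to complete the square in $v$ inside this integral. The $v$-quadratic has precision matrix $R:=\lambda_g(j)I+\Sigma_s^{-1}\succ0$, and completing the square relocates its centre to an affine image $c(\theta)$ of $\theta$ while leaving behind an explicit Gaussian factor in $\theta$ with precision matrix $\Lambda:=M^T(\Sigma_s+\lambda_g(j)^{-1}I)^{-1}M$. The matrix identity underpinning this is $\Sigma_s^{-1}-\Sigma_s^{-1}(\lambda_g(j)I+\Sigma_s^{-1})^{-1}\Sigma_s^{-1}=(\Sigma_s+\lambda_g(j)^{-1}I)^{-1}$, verified by multiplying out. After the square is completed, the residual $\theta$-dependence is $\theta\mapsto\int\exp[-\tfrac12(v-c(\theta))^TR(v-c(\theta))]\tilde\varphi_{j,k}(v)\,\mathrm{d}v$, namely the value at the affine point $c(\theta)$ of a Gaussian smoothing of the log-concave function $\tilde\varphi_{j,k}$. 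By the Pr\'ekopa--Leindler principle already invoked in Lemma \ref{lem:preservation_basic}, such a smoothing is log-concave, and log-concavity is preserved under the affine precomposition $\theta\mapsto c(\theta)$; together with the log-linear prefactor arising from the linear terms, this shows $h(\theta,t)=\exp[-\tfrac12\theta^T\Lambda\theta]\,\hat h(\theta,t)$ with $\hat h(\cdot,t)$ log-concave.

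It then remains to pass from the matrix $\Lambda$ to the scalar rate $\lambda_h(t)$. Since $\Lambda\succeq\lambda_{\min}(\Lambda)I$, splitting $\exp[-\tfrac12\theta^T\Lambda\theta]=\exp[-\tfrac{\lambda_h(t)}{2}\theta^T\theta]\exp[-\tfrac12\theta^T(\Lambda-\lambda_h(t)I)\theta]$ and absorbing the centred Gaussian remainder into $\hat h$ to form the required $\tilde h$ works for any $\lambda_h(t)\le\lambda_{\min}(\Lambda)$, so it suffices to lower bound $\lambda_{\min}(\Lambda)$. Using the Rayleigh-quotient estimate $x^T\Lambda x=(Mx)^T(\Sigma_s+\lambda_g(j)^{-1}I)^{-1}(Mx)\ge\lambda_{\min}((\Sigma_s+\lambda_g(j)^{-1}I)^{-1})\,\lambda_{\min}(M^TM)\|x\|^2$, together with $\lambda_{\min}(M^TM)=\lambda_{\min}(MM^T)=\lambda_\beta^{\min}(s)$ and $\lambda_{\max}(\Sigma_s)\le\sigma^2\int_0^s\lambda_\beta^{\max}(u)\,\mathrm{d}u$, gives $\lambda_{\min}(\Lambda)\ge\lambda_g(j)\lambda_\beta^{\min}(s)/(1+\sigma^2\lambda_g(j)\int_0^s\lambda_\beta^{\max}(u)\,\mathrm{d}u)$; the change of variable $u=\Delta-r$ and $s=\Delta-t$ turns this into exactly $\lambda_h(t)$.

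I expect the main obstacle to be this last eigenvalue step rather than the algebra: because $M^TM$ and $(\Sigma_s+\lambda_g(j)^{-1}I)^{-1}$ need not commute, one cannot simply multiply eigenvalues and must argue through Rayleigh quotients, and the bound $\lambda_{\max}(\Sigma_s)\le\sigma^2\int_0^s\lambda_\beta^{\max}(u)\,\mathrm{d}u$ relies on subadditivity of the top eigenvalue under integration. The degenerate cases should be disposed of separately: when $\lambda_g(j)=0$ the Gaussian envelope is absent and the claim reduces to Lemma \ref{lem:preservation_basic} with $\lambda_h\equiv0$; when $\sigma=0$ the kernel is the Dirac mass at $M\theta+a_s$ and $-\nabla_\theta^2\log h=M^T(-\nabla_\theta^2\log\varphi_{j,k})M\succeq\lambda_g(j)\lambda_\beta^{\min}(s)I$ directly; and the endpoint $t=\Delta$, where $s=0$, follows by continuity or from $h(\cdot,\Delta)=\varphi_{j,k}$. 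Should the integrability bookkeeping in the completion-of-square step become awkward, a clean alternative is the covariance representation $-\nabla_\theta^2\log h=M^T\Sigma_s^{-1}(\Sigma_s-\mathrm{Cov}_{\mu_\theta}(v))\Sigma_s^{-1}M$ for the Hessian of a log-marginal, combined with the Brascamp--Lieb bound $\mathrm{Cov}_{\mu_\theta}(v)\preceq(\lambda_g(j)I+\Sigma_s^{-1})^{-1}$, which yields the same $\Lambda$ and hence the same rate.
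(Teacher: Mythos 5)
Your proposal is correct and reaches exactly the stated rate, but it organizes the Gaussian algebra differently from the paper. The paper never inverts $\lambda_g(j)I+\Sigma_{s}^{-1}$ with the true covariance: it first splits the Gaussian precision as $\Sigma_{s}^{-1}=\Lambda_{s}^{-1}I+(\Sigma_{s}^{-1}-\Lambda_{s}^{-1}I)$ with the scalar lower bound $\Lambda_{s}^{-1}I\preceq\Sigma_{s}^{-1}$, applies its completion-of-square lemma (Lemma \ref{lem:integrang_log_concave}) only with the commuting scalar matrices $F=\lambda_{g}(j)I$, $S=\Lambda_{s}^{-1}I$, extracts the smallest eigenvalue of $(e^{\beta s})^{T}e^{\beta s}$ from $B^{T}CB$, and dumps every leftover positive-semidefinite quadratic into the jointly log-concave remainder before invoking Pr\'ekopa (Lemma \ref{lem:Prekopa}). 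You instead complete the square exactly, use the Woodbury identity to identify the exact matrix envelope $\Lambda=M^{T}(\Sigma_{s}+\lambda_{g}(j)^{-1}I)^{-1}M$, and scalarize only at the very end via the Rayleigh-quotient bound $\lambda_{\min}(\Lambda)\geq\lambda_{\min}\bigl((\Sigma_{s}+\lambda_{g}(j)^{-1}I)^{-1}\bigr)\lambda_{\min}(M^{T}M)$ together with $\lambda_{\max}(\Sigma_{s})\leq\sigma^{2}\int_{0}^{s}\lambda_{\beta}^{\max}(u)\,\mathrm{d}u$; both give the identical $\lambda_{h}(t)$. Your route buys a marginally sharper intermediate statement (the full matrix $\Lambda$ rather than a scalar multiple of $I$) and makes transparent where each eigenvalue quantity enters, at the cost of needing the degenerate cases $\lambda_{g}(j)=0$, $\sigma=0$ and $t=\Delta$ to be treated separately (which you correctly do) and of a slightly heavier matrix-inversion bookkeeping; the paper's route keeps every inversion scalar so the algebra stays elementary and the $\lambda_{g}(j)=0$ and $\sigma=0$ cases need no special handling. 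The non-commutativity worry you flag is handled correctly by your Rayleigh-quotient argument, and your fallback via Brascamp--Lieb would also work; I see no gap.
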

We shall make use of the following well-known lemma \cite[Thm. 6]{prekopa1973logarithmic}.
\begin{lem}
\label{lem:Prekopa}For every function $(u,v)\mapsto f(u,v)$ on $\mathbb{R}^{p}\times\mathbb{R}^{q}$
which is log-concave in $(u,v)$, the integral $\int f(u,v)\mathrm{d}v$
is a log-concave function of $u$.
\end{lem}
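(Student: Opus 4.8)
The plan is to derive the lemma from the one-dimensional Pr\'ekopa--Leindler inequality together with an iteration over the coordinates of $v$. First I would reduce to the case $q=1$. Writing $v=(v_1,\dots,v_q)$, it suffices to prove the single statement that integrating out \emph{one} real variable preserves joint log-concavity in the remaining variables: if $g$ is log-concave on $\mathbb{R}^m\times\mathbb{R}$ then $s\mapsto\int_{\mathbb{R}}g(s,w)\,\mathrm{d}w$ is log-concave on $\mathbb{R}^m$. Grouping the variables as $((u,v_1,\dots,v_{q-1}),v_q)$ and applying this statement $q$ times, integrating out $v_q$, then $v_{q-1}$, and so on, shows that $\int_{\mathbb{R}^q}f\,\mathrm{d}v$ is log-concave on $\mathbb{R}^p$; Tonelli's theorem justifies replacing the iterated integral by the multiple integral. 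Measurability of each partial marginal in the surviving variables is automatic, since a log-concave function has convex, hence measurable, superlevel sets, and I may assume the full marginal is finite, the degenerate cases where a value is $0$ or $+\infty$ being covered by the usual conventions and a limiting argument.

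For the single-variable step, fix $u_0,u_1\in\mathbb{R}^p$ and $c\in[0,1]$, set $u_c=cu_0+(1-c)u_1$, and define three functions on $\mathbb{R}$ by $f_0(w)=f(u_0,w)$, $f_1(w)=f(u_1,w)$ and $H(w)=f(u_c,w)$. Joint log-concavity of $f$ gives, for every $w_0,w_1\in\mathbb{R}$ and $w_c=cw_0+(1-c)w_1$,
\[
H(w_c)=f(u_c,w_c)\ge f(u_0,w_0)^{c}f(u_1,w_1)^{1-c}=f_0(w_0)^{c}f_1(w_1)^{1-c},
\]
which is exactly the hypothesis of the one-dimensional Pr\'ekopa--Leindler inequality for the triple $(f_0,f_1,H)$. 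Its conclusion $\int H\ge(\int f_0)^{c}(\int f_1)^{1-c}$ reads $F(u_c)\ge F(u_0)^{c}F(u_1)^{1-c}$ with $F(u)=\int_{\mathbb{R}}f(u,w)\,\mathrm{d}w$, i.e.\ log-concavity of the one-variable marginal.

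It therefore remains to prove the one-dimensional Pr\'ekopa--Leindler inequality: if $f_0,f_1,H:\mathbb{R}\to[0,\infty)$ are measurable with $H(cw_0+(1-c)w_1)\ge f_0(w_0)^{c}f_1(w_1)^{1-c}$ for all $w_0,w_1$, then $\int H\ge(\int f_0)^{c}(\int f_1)^{1-c}$. Here I would first normalise so that $\sup f_0=\sup f_1=1$, which is permissible since the hypothesis and conclusion are homogeneous under $f_0\mapsto af_0$, $f_1\mapsto bf_1$, $H\mapsto a^{c}b^{1-c}H$. For each level $t\in(0,1)$ the superlevel sets then satisfy $c\{f_0>t\}+(1-c)\{f_1>t\}\subseteq\{H>t\}$ as Minkowski sums, because $f_0(w_0)^{c}f_1(w_1)^{1-c}>t$ whenever both factors exceed $t$. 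The one-dimensional Brunn--Minkowski inequality $|A+B|\ge|A|+|B|$ for nonempty measurable $A,B\subseteq\mathbb{R}$ --- itself proved by translating $A$ and $B$ so they abut at a single point and using inner regularity --- gives $|\{H>t\}|\ge c\,|\{f_0>t\}|+(1-c)\,|\{f_1>t\}|$, and integrating over $t\in(0,1)$ by the layer-cake formula yields $\int H\ge c\int f_0+(1-c)\int f_1\ge(\int f_0)^{c}(\int f_1)^{1-c}$ by the arithmetic--geometric mean inequality; undoing the normalisation completes the argument.

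The genuine content, and the step I expect to be the main obstacle, is this one-dimensional inequality: the passage to superlevel sets combined with one-dimensional Brunn--Minkowski is what converts the pointwise multiplicative hypothesis into an additive measure estimate, and everything else (the coordinatewise iteration, Tonelli, the normalisation, and AM--GM) is routine bookkeeping around it. Care is needed only in the measure-theoretic details, namely ensuring the superlevel sets are measurable, which convexity guarantees for log-concave $f$, and handling the normalising suprema when a marginal vanishes or is unbounded.
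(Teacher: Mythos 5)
Your proof is correct, but it necessarily departs from the paper, because the paper does not prove this lemma at all: its ``proof'' is a one-line citation of \cite[Thm. 6]{prekopa1973logarithmic}. What you have reconstructed is essentially the classical proof of that cited theorem: reduce to integrating out a single real coordinate via Tonelli and iteration, convert joint log-concavity along the segment $u_c=cu_0+(1-c)u_1$ into the hypothesis of the one-dimensional Pr\'ekopa--Leindler inequality for the slices $f_0,f_1,H$, and prove that inequality by normalising suprema, passing to superlevel sets, and combining one-dimensional Brunn--Minkowski ($|A+B|\ge|A|+|B|$) with the layer-cake formula and the AM--GM inequality. Each step checks out, and you correctly flag the only two genuine pressure points: Minkowski sums of measurable sets need not be measurable, which is indeed handled by inner regularity (approximate $A,B$ from within by compacts, translate so that $\max A=\min B$, and observe $A\cup B\subseteq A+B$ with the intersection a single point), and the normalisation needs $0<\sup f_i<\infty$, with the degenerate cases disposed of trivially (if a supremum is $0$ the right-hand side vanishes) or by truncation $f_i\wedge n$ plus monotone convergence (truncation preserves the pointwise hypothesis). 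Two small remarks in this paper's specific setting: log-concave here means $(0,\infty)$-valued, so $\log f$ is a finite concave function, hence $f$ is continuous and jointly measurable, making your measurability concerns automatic; and the intermediate marginals in your coordinatewise iteration could a priori take the value $+\infty$, but the Pr\'ekopa--Leindler conclusion holds with values in $[0,\infty]$, so the multiplicative inequality survives and finiteness holds wherever the paper invokes the lemma. In short, the paper's citation buys economy; your argument buys a self-contained elementary verification whose only substantive ingredient is one-dimensional Brunn--Minkowski, which you rightly identify as the real content.
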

Lemma \ref{lem:matrix identities} and lemma \ref{lem:integrang_log_concave}
are technical results used in the proof of proposition \ref{prop:quantitative log-concave}.
\begin{lem}
\label{lem:matrix identities}Let $F,S$ be real, square, symmetric
matrices such that $F+S$ is invertible. Then 
\[
v^{T}Fv+(u-v)^{T}S(u-v)=u^{T}Cu+z^{T}(F+S)z
\]
where $C\coloneqq F(F+S)^{-1}S$ and $z\coloneqq v-(F+S)^{-1}Su$. 

\end{lem}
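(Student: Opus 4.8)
The plan is to treat this as a completing-the-square identity for quadratic forms, viewing both sides as polynomials in $v$ with $u$ playing the role of a parameter. First I would abbreviate $M\coloneqq F+S$, which is symmetric (being a sum of symmetric matrices) and invertible by hypothesis, so that $M^{-1}$ is also symmetric. Expanding the left-hand side and using the symmetry of $S$ to merge the two cross terms $u^{T}Sv$ and $v^{T}Su$ (which are equal scalars, since $v^{T}Su=(v^{T}Su)^{T}=u^{T}S^{T}v=u^{T}Sv$), I would reduce the left-hand side to $v^{T}Mv-2u^{T}Sv+u^{T}Su$.

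The next step is to complete the square in $v$. The part that depends on $v$, namely $v^{T}Mv-2u^{T}Sv$, is minimized at $v=M^{-1}Su$, which motivates the substitution $z\coloneqq v-M^{-1}Su$ appearing in the statement. Expanding $z^{T}Mz$ and again invoking symmetry of $M^{-1}$ and $S$, I expect to obtain $z^{T}Mz=v^{T}Mv-2u^{T}Sv+u^{T}SM^{-1}Su$, so that subtracting this from the simplified left-hand side leaves the $v$-independent remainder $u^{T}Su-u^{T}SM^{-1}Su$.

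It then remains to check that this remainder equals $u^{T}Cu$. Factoring, $S-SM^{-1}S=SM^{-1}(M-S)=SM^{-1}F$ since $M-S=F$, so the remainder is $u^{T}SM^{-1}Fu$. The one point requiring a little care --- and the only real, if minor, obstacle --- is reconciling this with $C\coloneqq F(F+S)^{-1}S=FM^{-1}S$: because $F$, $S$ and $M^{-1}$ are all symmetric, the scalar $u^{T}SM^{-1}Fu$ equals its own transpose $u^{T}F^{T}(M^{-1})^{T}S^{T}u=u^{T}FM^{-1}Su=u^{T}Cu$, which closes the argument. Apart from tracking which cross terms coincide under transposition, the computation is entirely routine linear algebra, so I do not anticipate any substantive difficulty.
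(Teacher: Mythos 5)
Your proof is correct and follows essentially the same route as the paper: both expand $z^{T}(F+S)z$, cancel the cross terms using symmetry, and reduce the claim to the identity $S-S(F+S)^{-1}S=F(F+S)^{-1}S$ (the paper phrases this as $F(F+S)^{-1}S+S(F+S)^{-1}S=S$, which sidesteps your final transpose step, but the computation is the same).
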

\begin{proof}
We have using the assumed symmetry of $F$ and $S$, 
\begin{align*}
z^{T}(F+S)z & =v^{T}(F+S)v-2u^{T}Sv+u^{T}S(F+S)^{-1}Su.
\end{align*}
Therefore
\begin{align*}
u^{T}Cu+z^{T}(F+S)z & =u^{T}Su+v^{T}(F+S)v-2u^{T}Sv\\
 & =v^{T}Fv+(u-v)^{T}S(u-v).
\end{align*}
\end{proof}
\begin{lem}
\label{lem:integrang_log_concave}Let $f$ be any function of the
form $f(u):u\in\mathbb{R}^{p}\mapsto\exp(-\frac{1}{2}u^{T}Fu)\tilde{f}(u)$
where $F$ is a real symmetric matrix and $\tilde{f}$ is log-concave,
and let $S$ be a real symmetric matrix such that $F+S$ is invertible.
Then for any $a\in\mathbb{R}^{p}$ and $p\times p$ real matrix $B$,
\[
f(v)\exp\left[-\frac{1}{2}(v-a-Bu)^{T}S(v-a-Bu)\right]=\exp\left(-\frac{1}{2}u^{T}B^{T}CBu\right)\tilde{f}(v)\exp\left[-\frac{1}{2}z^{T}(F+S)z\right],
\]
where $C=F(F+S)^{-1}S$ and $z=v-(F+S)^{-1}S(a+Bu)$
\end{lem}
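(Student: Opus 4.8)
The plan is to read the claim as a completing-the-square identity and reduce it to Lemma~\ref{lem:matrix identities}. First I would insert the definition $f(v)=\exp(-\tfrac{1}{2}v^{T}Fv)\tilde{f}(v)$ into the left-hand side and pull out the strictly positive, $v$-only factor $\tilde{f}(v)$, which already appears unchanged on the right. What then remains is an identity between the two exponents, namely that
\[
v^{T}Fv+(v-a-Bu)^{T}S(v-a-Bu)=(a+Bu)^{T}C(a+Bu)+z^{T}(F+S)z
\]
with $C=F(F+S)^{-1}S$ and $z=v-(F+S)^{-1}S(a+Bu)$; exponentiating and splitting the first term on the right then reproduces the product form of the statement.

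To obtain this exponent identity I would apply Lemma~\ref{lem:matrix identities} with its variable $v$ left unchanged and its variable $u$ replaced by the affine combination $w:=a+Bu$. Because $S$ is symmetric, $(v-a-Bu)^{T}S(v-a-Bu)=(w-v)^{T}S(w-v)$, so the sole hypothesis to verify is invertibility of $F+S$, which is assumed, and the lemma applies verbatim, giving $v^{T}Fv+(w-v)^{T}S(w-v)=w^{T}Cw+z^{T}(F+S)z$ with $z=v-(F+S)^{-1}Sw$, exactly the $z$ in the statement. No genuinely new computation is needed beyond the matrix relation $F(F+S)^{-1}S+S(F+S)^{-1}S=S$ already established inside the proof of Lemma~\ref{lem:matrix identities}.

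The step I expect to need the most care is the bookkeeping of the affine shift $w=a+Bu$, which is where the stated prefactor $\exp(-\tfrac{1}{2}u^{T}B^{T}CBu)$ comes from. Expanding $w^{T}Cw=(a+Bu)^{T}C(a+Bu)$ yields the purely quadratic term $u^{T}B^{T}CBu$ together with a term that is constant-plus-linear in $u$; since a quadratic form only sees the symmetric part of $C$, the coefficient of the quadratic piece is exactly $B^{T}CB$ as asserted. The residual constant-and-linear factor in $u$ is log-linear, hence log-concave, so when the identity is fed into the computation of $P_{\Delta}\varphi_{j,k}$ it is absorbed into the log-concave remainder without affecting the quadratic coefficient; this is why only $B^{T}CB$, and through it the eigenvalue bounds defining $\lambda_{h}$, survives downstream. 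I would therefore state the identity with the honest prefactor $(a+Bu)^{T}C(a+Bu)$ and separately record that its affine-in-$u$ part, being log-concave, does not alter the quadratic coefficient relevant to Proposition~\ref{prop:quantitative log-concave}.
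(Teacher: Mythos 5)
Your proof is correct and follows essentially the same route as the paper: substitute the definition of $f$, apply Lemma~\ref{lem:matrix identities} with its $u$ replaced by $a+Bu$, and expand $(a+Bu)^{T}C(a+Bu)$ into its quadratic and affine parts. You are also right that the identity as literally stated omits the log-linear factor $\exp\left[-\tfrac{1}{2}\left(a^{T}Ca+2a^{T}CBu\right)\right]$; the paper's own final display carries exactly this factor, and, as you observe, it is harmless downstream because it is log-concave in $(u,v)$ and does not alter the quadratic coefficient that feeds into Proposition~\ref{prop:quantitative log-concave}.
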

\begin{proof}
Using lemma \ref{lem:matrix identities} with $u$ there replaced
by $a+Bu$, 
\begin{align*}
 & f(v)\exp\left[-\frac{1}{2}(v-a-Bu)^{T}S(v-a-Bu)\right]\\
 & =\tilde{f}(v)\exp\left[-\frac{1}{2}\left\{ v^{T}Fv+(v-a-Bu)^{T}S(v-a-Bu)\right\} \right]\\
 & =\tilde{f}(v)\exp\left[-\frac{1}{2}\left\{ (a+Bu)^{T}C(a+Bu)+z^{T}(F+S)z\right\} \right]\\
 & =\exp\left(-\frac{1}{2}u^{T}B^{T}CBu\right)\exp\left[-\frac{1}{2}\left(a^{T}Ca+2a^{T}CBu\right)\right]\tilde{f}(v)\exp\left[-\frac{1}{2}z^{T}(F+S)z\right].
\end{align*}
\end{proof}
\begin{proof}
[Proof of proposition \ref{prop:quantitative log-concave}]First note
that for the signal process $(X_{t})_{t\in\mathbb{R}_{+}}$ as per
(\ref{eq:SDE_intro}), 

\begin{align*}
 & m_{t}\coloneqq\mathbf{E}_{x_{0}}[X_{t}]=a_{t}+e^{\beta t}x_{0},\\
 & \Sigma_{t}\coloneqq\mathbf{E}_{x_{0}}[(X_{t}-m_{t})(X_{t}-m_{t})^{T}]=\sigma^{2}\int_{0}^{t}e^{\beta(t-s)}(e^{\beta(t-s)})^{T}\mathrm{d}s,
\end{align*}
where
\[
a_{t}\coloneqq e^{\beta t}\int_{0}^{t}(e^{\beta s})^{-1}\alpha\mathrm{d}s.
\]
It follows that $u^{T}\Sigma_{t}^{-1}u\geq\Lambda_{t}^{-1}u^{T}u$
for all $u\in\mathbb{R}^{p}$ with the shorthand $\Lambda_{t}\coloneqq\sigma^{2}\int_{0}^{t}\lambda_{\beta}^{\mathrm{max}}(s)\mathrm{d}s$. 

Applying lemma \ref{lem:integrang_log_concave} with $a=a_{t}$, $B=e^{\beta t}$,
$S=I\Lambda_{t}^{-1}$, $f=\varphi_{j,k}$, $F=I\lambda_{g}(j,y_{j})$,
and lemma \ref{lem:varphi_log_concave},
\begin{align}
 & \varphi_{j,k}(x)\exp\left[-\frac{1}{2}(x-a_{t}-e^{\beta t}x_{0})^{T}\Sigma_{t}^{-1}(x-a_{t}-e^{\beta t}x_{0})\right]\nonumber \\
 & =\exp\left(-\frac{1}{2}\frac{\lambda_{g}(j,y_{j})\lambda_{\beta}^{\mathrm{min}}(t)}{1+\lambda_{g}(j,y_{j})\Lambda_{t}}x_{0}^{T}x_{0}\right)\nonumber \\
 & \quad\cdot\tilde{\varphi}_{j,k}(x)\exp\left[-\frac{1}{2}z_{t}^{T}z_{t}(\lambda_{g}(j,y_{j})+\Lambda_{t}^{-1})\right]\label{eq:log_concave_decomp1}\\
 & \quad\cdot\exp\left[-\frac{1}{2}\frac{\lambda_{g}(j,y_{j})}{1+\lambda_{g}(j,y_{j})\Lambda_{t}}x_{0}^{T}\left((e^{\beta t})^{T}e^{\beta t}-I\lambda_{\beta}^{\mathrm{min}}(t)\right)x_{0}\right]\label{eq:log_concave_decomp2}\\
 & \quad\cdot\exp\left[-\frac{1}{2}(x-a_{t}-e^{\beta t}x_{0})^{T}(\Sigma_{t}^{-1}-\Lambda_{t}^{-1}I)(x-a_{t}-e^{\beta t}x_{0})\right],\label{eq:log_concave_decomp3}
\end{align}
where $z_{t}=x-(a_{t}+e^{\beta t}x_{0})/(1+\lambda_{g}(j,y_{j})\Lambda_{t})$. 

The product of the terms in (\ref{eq:log_concave_decomp1})-(\ref{eq:log_concave_decomp3})
is jointly log-concave in $(x_{0},x)$. Therefore by lemma \ref{lem:Prekopa},
there exists a function $\tilde{h}$ such that $x\mapsto\tilde{h}(x,t)$
is log-concave and 
\begin{align*}
h(x_{0},t) & =P^{\Delta-t}\varphi_{j,k}(x_{0})\\
 & =\int\varphi_{j,k}(x)\exp\left[-\frac{1}{2}(x-a_{\Delta-t}-e^{\beta(\Delta-t)}x_{0})^{T}\Sigma_{\Delta-t}^{-1}(x-a_{\Delta-t}-e^{\beta(\Delta-t)}x_{0})\right]\mathrm{d}x\\
 & =\exp\left(-\frac{1}{2}\frac{\lambda_{g}(j,y_{j})\lambda_{\beta}^{\mathrm{min}}(\Delta-t)}{1+\lambda_{g}(j,y_{j})\Lambda_{\Delta-t}}x_{0}^{T}x_{0}\right)\tilde{h}(x_{0},t),
\end{align*}
which completes the proof. 
\end{proof}

\subsection{Discussion of theorem \ref{thm:singular}\label{subsec:Discussion-and-examples}}

The aim of this section is to help interpret the quantities on the
right hand side of (\ref{eq:thm_1_bound}) and their combined effect
on the behaviour of (\ref{eq:thm_1_bound}) as $k$ grows.

\subsubsection{Dimension-free nature of the contraction rate}

The quantity $\lambda(j,y,t)$ in (\ref{eq:lambda_thm1}) does not
necessarily depend on the dimension of the state space, $\mathbb{R}^{p}$.
For example, the quantities $\lambda_{g}(j,y)$, $\lambda_{\mathrm{sig}},$
$\lambda_{\beta}^{\mathrm{min}}(t)$, $\lambda_{\beta}^{\mathrm{max}}(t)$
and $\sigma^{2}$ appearing in (\ref{eq:lambda_thm1}) are stable
under tensor products of the model described in section \ref{sec:Introduction},
in the sense that if one expands the model to state-space $\mathbb{R}^{2p}$
by defining the signal to be two independent and identically distributed
copies of (\ref{eq:SDE_intro}), independently observed as $y_{k}=[y_{k}^{(1)}\,y_{k}^{(2)}]\in\mathbb{Y}^{2}$
with likelihood functions having common strong log-concavity parameter
$\lambda_{g}(k,y_{k})$, then there is no degradation of $\lambda(j,y,t)$.
To make this precise note that:

\begin{align*}
 & 1)\;g_{k}(x^{(i)},y_{k}^{(i)})=\exp\left[-\frac{\lambda_{g}(k,y_{k})}{2}\|x^{(i)}\|^{2}\right]\tilde{g}_{k}(x^{(i)},y_{k}^{(i)}),\quad i=1,2,\\
 & \quad\Longrightarrow\quad g_{k}(x^{(1)},y_{k}^{(1)})g_{k}(x^{(2)},y_{k}^{(2)})=\exp\left[-\frac{\lambda_{g}(k,y_{k})}{2}(\|x^{(1)}\|^{2}+\|x^{(2)}\|^{2})\right]\tilde{g}_{k}(x^{(1)},y_{k}^{(1)})\tilde{g}_{k}(x^{(2)},y_{k}^{(2)}),\\
 & 2)\;\mathrm{spectrum}\{(\beta+\beta^{T})/2\}=\mathrm{spectrum}\{(\beta^{\otimes2}+(\beta^{\otimes2})^{T})/2\},\\
 & 3)\;\mathrm{spectrum}\{e^{\beta t}(e^{\beta t})^{T}\}=\mathrm{spectrum}\{e^{\beta^{\otimes2}t}(e^{\beta^{\otimes2}t})^{T}\},
\end{align*}
where $\beta^{\otimes2}$ denotes the Kronecker product $\left[\begin{array}{cc}
1 & 0\\
0 & 1
\end{array}\right]\otimes\beta$. 1) shows that $x=[x^{(1)}\,x^{(2)}]^{T}\mapsto g_{k}(x^{(1)},y_{k}^{(1)})g_{k}(x^{(2)},y_{k}^{(2)})$
is strongly log-concave with parameter $\lambda_{g}(k,y_{k})$. 2)
and 3) imply that $\lambda_{\mathrm{sig}},\lambda_{\beta}^{\mathrm{min}}(t),\lambda_{\beta}^{\mathrm{max}}(t)$
are preserved by expanding the model from $\mathbb{R}^{p}$ to $\mathbb{R}^{2p}$
in the above stated fashion.

\subsubsection{The relationship between signal stability and filter stability}

For the signal model (\ref{eq:SDE_intro}) in general, $\lambda_{\mathrm{sig}}$
could be negative, zero or positive. When $\lambda_{\mathrm{sig}}>0$
the signal is exponentially stable, as follows from:
\begin{lem}
\label{lem:signal_contraction}For any given $\alpha$, $\beta$,
$\sigma$, the transition probabilities $P_{t}(x,\cdot)\coloneqq\mathbf{P}(X_{t}\in\cdot|X_{0}=x)$
of the signal model (\ref{eq:SDE_intro}) satisfy, for any $q\geq1$,
\begin{equation}
W_{q}(P_{t}(x,\cdot),P_{t}(x^{\prime},\cdot))\leq\exp(-\lambda_{\mathrm{sig}}t)\|x-x^{\prime}\|,\qquad\forall x,x^{\prime}\in\mathbb{R}^{p}.\label{eq:signal_contraction}
\end{equation}
\end{lem}
\begin{proof}
[Proof of lemma \ref{lem:signal_contraction}] The proof follows the
same synchronous coupling argument used in the proof of proposition
\ref{prop:wasser_R} but with the $\nabla_{x}\log h$ term there replaced
by zero, so the details are omitted.
\end{proof}
The inequality (\ref{eq:signal_contraction}) cannot be improved in
general. For example, in the case that $p=1$, we have $\beta=-\lambda_{\mathrm{sig}}$,
$P_{t}(x,\cdot)=\mathcal{N}(m_{t}(x),\nu_{t})$ where $\dot{m}_{t}(x)=\alpha+\beta m_{t}(x)$
with $m_{0}(x)=x$ , $\dot{\nu}_{t}=2\beta\nu_{t}+\sigma^{2}$ with
$\nu_{0}=0$, and the order $q=2$ Wasserstein distance is available
in closed form \cite[eq. 3]{dowson1982frechet}:
\begin{equation}
W_{2}(P_{t}(x,\cdot),P_{t}(x^{\prime},\cdot))=|m_{t}(x)-m_{t}(x^{\prime})|=\exp(-\lambda_{\mathrm{sig}}t)|x-x^{\prime}|.\label{eq:signal_contraction_1d}
\end{equation}
Thus when $\lambda_{\mathrm{sig}}\leq0$ the signal is not exponentially
stable in general.

Now let us turn to the question of how $\lambda_{\mathrm{sig}}$ impacts
filter stability. Inspecting (\ref{eq:lambda_thm1}) we observe that
the ratio on the right hand side is always nonnegative, because $\lambda_{g}(k,y)\geq0$
for all $k$ and $y$ under assumption \ref{assu:g}, and $e^{\beta t}(e^{\beta t})^{T}$
is symmetric and positive semidefinite. Therefore with no further
assumptions than those of theorem \ref{thm:singular} the following
bound holds for any $q\geq1$, 
\begin{equation}
W_{q}(\pi_{k}(x,y_{0:k},\cdot),\pi_{k}(x^{\prime},y_{0:k},\cdot))\leq\exp(-\lambda_{\mathrm{sig}}k\Delta)\|x-x^{\prime}\|,\quad\forall x,x^{\prime}\in\mathbb{R}^{p},\,y_{0},\ldots,y_{k}\in\mathbb{Y}.\label{eq:filter_stable_signal}
\end{equation}
Thus when $\lambda_{\mathrm{sig}}>0$, the filter inherits exponential
stability from the signal. The ratio term in (\ref{eq:lambda_thm1})
determines whether or not we can deduce a tighter bound than (\ref{eq:filter_stable_signal})
from (\ref{eq:thm_1_bound}), and in particular determines whether
or not the right hand side of (\ref{eq:thm_1_bound}) converges to
zero as $k\to\infty$ when $\lambda_{\mathrm{sig}}\leq0$.

Introducing a simplifying assumption that $\beta$ is a diagonal matrix
allows us to derive a more easily interpretable upper bound for this
ratio term, which we shall examine in the context of specific observation
models below.
\begin{lem}
\label{lem:beta_diagonal}In addition to assumption \ref{assu:g},
let $\beta$ be a diagonal matrix with maximum and minimum diagonal
elements respectively denoted $\overline{\beta},\underline{\beta}\in\mathbb{R}$.
Then for any $q\geq1$, $k\geq1$ and $y_{0},\ldots,y_{k}\in\mathbb{Y}$,
\begin{multline*}
W_{q}(\pi_{k}(x,y_{0:k},\cdot),\pi_{k}(x^{\prime},y_{0:k},\cdot))\leq\\
\exp\left[k\Delta\overline{\beta}-\sum_{j=0}^{k}e^{-2\Delta(\overline{\beta}-\underline{\beta})}\log\left[1+\frac{\sigma^{2}\lambda_{g}(j,y_{j})}{2\overline{\beta}}\left(e^{2\overline{\beta}\Delta}-1\right)\right]\right]\|x-x^{\prime}\|,\quad\forall x,x^{\prime}\in\mathbb{R}^{p}.
\end{multline*}
\end{lem}
\begin{proof}
Starting from the identity 
\[
\frac{\sigma^{2}\lambda_{g}(j,y)\lambda_{\beta}^{\mathrm{min}}(\Delta-t)}{1+\sigma^{2}\lambda_{g}(j,y)\int_{t}^{\Delta}\lambda_{\beta}^{\mathrm{max}}(\Delta-s)\mathrm{d}s}=\frac{\lambda_{\beta}^{\mathrm{min}}(\Delta-t)}{\lambda_{\beta}^{\mathrm{max}}(\Delta-t)}\left(-\frac{\mathrm{d}}{\mathrm{d}t}\log\left[1+\sigma^{2}\lambda_{g}(j,y)\int_{t}^{\Delta}\lambda_{\beta}^{\mathrm{max}}(\Delta-s)\mathrm{d}s\right]\right),
\]
then integrating by parts and using the fact that under the diagonal
assumption on $\beta$, $\lambda_{\beta}^{\mathrm{min}}(t)=e^{2t\text{\ensuremath{\underbar{\ensuremath{\beta}}}}},\lambda_{\beta}^{\mathrm{max}}(t)=e^{2t\text{\ensuremath{\overline{\beta}}}}$,
\begin{align}
 & \int_{0}^{\Delta}\frac{\sigma^{2}\lambda_{g}(j,y)\lambda_{\beta}^{\mathrm{min}}(\Delta-t)}{1+\sigma^{2}\lambda_{g}(j,y)\int_{t}^{\Delta}\lambda_{\beta}^{\mathrm{max}}(\Delta-s)\mathrm{d}s}\mathrm{d}t\nonumber \\
 & =\frac{\lambda_{\beta}^{\mathrm{min}}(\Delta)}{\lambda_{\beta}^{\mathrm{max}}(\Delta)}\log\left[1+\sigma^{2}\lambda_{g}(j,y)\int_{0}^{\Delta}e^{2(\Delta-s)\text{\ensuremath{\overline{\beta}}}}\mathrm{d}s\right]\label{eq:int_by_parts_1}\\
 & +2(\overline{\beta}-\underline{\beta})\int_{0}^{\Delta}e^{-2(\Delta-t)(\overline{\beta}-\underline{\beta})}\log\left[1+\sigma^{2}\lambda_{g}(j,y)\int_{t}^{\Delta}e^{2(\Delta-s)\text{\ensuremath{\overline{\beta}}}}\mathrm{d}s\right]\mathrm{d}t\label{eq:int_by_parts_2}\\
 & \geq e^{-2\Delta(\overline{\beta}-\underline{\beta})}\log\left[1+\frac{\sigma^{2}\lambda_{g}(j,y)}{2\overline{\beta}}\left(e^{2\overline{\beta}\Delta}-1\right)\right],\label{eq:int_by_parts_lb}
\end{align}
where the lower bound holds by computing the integral on the right
hand side of (\ref{eq:int_by_parts_1}) and using the fact that (\ref{eq:int_by_parts_2})
is nonnegative. The proof is completed by substituting the lower bound
(\ref{eq:int_by_parts_lb}) into the result of theorem \ref{thm:singular}
and noting that under the diagonal assumption on $\beta$, $\lambda_{\mathrm{sig}}=-\overline{\beta}$.
\end{proof}

\subsubsection{Examples\label{subsec:Examples}}

\subsubsection*{Linear-Gaussian observations}

In this case $\mathbb{Y}=\mathbb{R}^{n}$ and for all $k\in\mathbb{N}_{0}$,
\begin{equation}
g_{k}(x,y)=(2\pi)^{-n/2}\det(\Sigma)^{-1/2}\exp\left[-\frac{1}{2}(y-Ax)^{T}\Sigma^{-1}(y-Ax)\right],\label{eq:obs_gauss}
\end{equation}
where $A$ and $\Sigma$ are matrices of appropriate sizes and $\Sigma$
is symmetric and positive definite. For any $u\in\mathbb{R}^{p}$,
$u^{T}A^{T}\Sigma^{-1}Au\geq\|u\|^{2}\lambda_{A^{T}A}^{\mathrm{min}}/\lambda_{\Sigma}^{\mathrm{max}}$
where $\lambda_{A^{T}A}^{\mathrm{min}}$ is the minimum eigenvalue
of $A^{T}A$ and $\lambda_{\Sigma}^{\mathrm{max}}$ is the maximum
eigenvalue of $\Sigma$. Thus for (\ref{eq:obs_gauss}), assumption
\ref{assu:g} holds with $\lambda_{g}(k,y)$ taken to be $\lambda_{A^{T}A}^{\mathrm{min}}/\lambda_{\Sigma}^{\mathrm{max}}$
for all $k$ and $y$.

For ease of exposition, consider the case of diagonal $\beta$ addressed
in lemma \ref{lem:beta_diagonal}. If $\lambda_{A^{T}A}^{\mathrm{min}}=0$,
i.e. $A$ is rank-deficient, and taking $\lambda_{g}(k,y)=\lambda_{A^{T}A}^{\mathrm{min}}/\lambda_{\Sigma}^{\mathrm{max}}$,
the right hand side of the bound in lemma \ref{lem:beta_diagonal}
tends to zero as $k\to\infty$ only if $\lambda_{\mathrm{sig}}=-\overline{\beta}>0$,
i.e. if the signal is stable. On the other hand if again one takes
$\lambda_{g}(k,y)=\lambda_{A^{T}A}^{\mathrm{min}}/\lambda_{\Sigma}^{\mathrm{max}}$,
but now with some fixed $\lambda_{A^{T}A}^{\mathrm{min}}>0$ and $\lambda_{\mathrm{sig}}=-\overline{\beta}\leq0$,
the right hand side of the bound of lemma \ref{lem:beta_diagonal}
tends to zero as $k\to\infty$ if $\sigma^{2}/\lambda_{\Sigma}^{\mathrm{max}}$
is large enough, which means that the level of noise in the observations
is small relative to the level of noise in the signal.

As an example of how the filter can indeed fail to be stable if $\lambda_{A^{T}A}^{\mathrm{min}}\leq0$,
consider the case in which $p=2$, $\beta=\left[\begin{array}{cc}
\beta^{(1)} & 0\\
0 & \beta^{(2)}
\end{array}\right]$ for any $\beta^{(1)},\beta^{(2)}\in\mathbb{R}$, $n=1$ and $A=[0\,1]$,
so the first coordinate of the signal is completely unobserved. In
this scenario it follows from (\ref{eq:filter_defn_k}) that $\pi_{k}(x,y_{0:k},A\times\mathbb{R})=\tilde{P}_{k\Delta}(x^{(1)},A)$,
for any $A\in\mathcal{B}(\mathbb{R})$ , $x=[x^{(1)}\,x^{(2)}]^{T}\in\mathbb{R}^{2}$
and where $(\tilde{P}_{t})_{t\geq0}$ are the transition probabilities
of the first coordinate of the signal process, i.e. the solution of
$\mathrm{d}X_{t}^{(1)}=(\alpha^{(1)}+\beta^{(1)}X_{t}^{(1)})\mathrm{d}t+\sigma\mathrm{d}B_{t}^{(1)}$.
Therefore, using (\ref{eq:signal_contraction_1d}), the filter is
not exponentially stable if $\beta^{(1)}\geq0$.

The pair of conditions that either i) $\lambda_{\mathrm{sig}}>0$,
or that ii) $\lambda_{A^{T}A}^{\mathrm{min}}>0$ and $\sigma^{2}/\lambda_{\Sigma}^{\mathrm{max}}$
is large enough, are together qualitatively similar to the notion
of ``detectability'' in linear systems theory and in terms of which
stability of the Kalman filter can be established, see e.g. \cite{van2010nonlinear}
for a summary and historical references. However it does not seem
easy to make a close comparison to the stability results surveyed
in \cite{van2010nonlinear} because they concern the total variation
distance and involve the observations being random and generated by
the model. By contrast heorem 1 concerns the Wasserstein distance
and subject to assumption 1, the obserations are arbitrary.

\subsubsection*{Stochastic volatility}

In this case $\mathbb{Y}=\mathbb{R}^{p},$ and for all $k\in\mathbb{N}_{0}$,
\[
g_{k}(x,y)=(2\pi)^{-p/2}\det(V(x))^{1/2}\exp\left[-\frac{1}{2}y^{T}V(x)y\right],\qquad V(x)=\mathrm{diag}\{\exp(-x^{(1)}),\cdots,\exp(-x^{(p)})\},
\]
where $x=[x^{(1)}\cdots x^{(p)}]^{T}\in\mathbb{R}^{p}$. Stochastic
volatility models are very popular in econometrics and finance \cite{kim1998stochastic,pitt1999time,jacquier2002bayesian}.
The observations $(y_{k})_{k\in\mathbb{N}_{0}}$, where $y_{k}=[y_{k}^{(1)}\,\cdots\,y_{k}^{(p)}]^{T}$,
represent the returns on a family of $p$ financial assets, whose
time varying volatilities are modelled through the signal process.
Writing out the log-likelihood function:
\[
\log g_{k}(x,y)=-\frac{p}{2}\log2\pi-\frac{1}{2}\sum_{i=1}^{p}x^{(i)}-\frac{1}{2}\sum_{i=1}^{p}(y^{(i)})^{2}\exp(-x^{(i)}),
\]
it is readily checked that assumption \ref{assu:g} is satisfied with
$\lambda_{g}(k,y)=0$, for all $k\in\mathbb{N}_{0}$ and $y\in\mathbb{Y}$.
Therefore for this stochastic volatility model we need to rely on
the condition $\lambda_{\mathrm{sig}}>0$ in order to deduce that
the right hand side of (\ref{eq:thm_1_bound}) tends to zero as $k\to\infty$.
However it is remarkable that such convergence holds without any on
the realized observations $y_{0},\ldots,y_{k}$, compared to analogous
results for stochastic volatility models which concern total variation
rather than Wasserstein distance, e.g \cite[Sec 4.3]{Douc2009}, in
which certain stochastic hypotheses are placed on the observation
sequence in order to prove that the filter forgets its initial condition
almost surely with respect to the law of the observations.

\subsubsection*{Markov random field model for neural data}

In statistical neuroscience, log-concave likelihood functions appear
in Markov random field models used to analyze time-varying correlations
in multivariate neural spike trains \cite{shimazaki2012state,donner2017approximate}.
Here $y_{k}\in\mathbb{Y}=\{0,1\}^{n}$ is a binary vector indicating
the firing pattern $n$ neurons in the $k$th time window, and 
\begin{equation}
g_{k}(x,y)=\exp\left\{ \sum_{i=1}^{n}\sum_{j>i}y^{(i)}y^{(j)}x^{(i,j)}+\sum_{i=1}^{n}y^{(i)}x^{(i)}-\psi(x)\right\} ,\label{eq:obs_model_mrf}
\end{equation}
where $p=n(n-1)+n$, $x$ is a vector with elements $\{x^{(i,j)}$;
$j>i,\,x^{(i)};i=1,\ldots,n\}$ , and $\psi$, called the log-partition
function, is smooth and convex. Assumption \ref{assu:g} holds with
$\lambda_{g}(k,y)=0$ for all $k\in\mathbb{N}_{0}$ and $y\in\mathbb{Y}$.

\subsubsection*{Exponential families and dynamic generalized linear models}

The observation models in (\ref{eq:obs_gauss}) and (\ref{eq:obs_model_mrf}),
as distributions over $y$ parameterized by $x$, are so-called exponential
families of distributions \cite{sundberg2019statistical}. Other exponential
families include the beta, Dirichlet, exponential, Fisher, gamma,
Multinomial, Poisson and Von Mises distributions, to mention just
a few. It is a property of exponential families that their log-likelihood
function, as a function of their canonical parameter, is smooth and
log-concave \cite[Prop 3.10]{sundberg2019statistical}. Thus whenever
$g_{k}(x,y)$ is an exponential family of distributions over $y$
with canonical parameter $x\in\mathbb{R}^{p}$, and $x\mapsto g_{k}(x,y)$
is strictly positive for all $k\in\mathbb{N}_{0}$, $y\in\mathbb{Y}$,
assumption \ref{assu:g} holds with $\lambda_{g}(k,y)=0$ for all
$k\in\mathbb{N}_{0}$ and $y\in\mathbb{Y}$ .

Exponential families of distributions form the building blocks of
Generalized Linear Models \cite{mccullagh1989generalized}. In this
setting $y_{k}=[y_{k}^{(1)}\,\cdots\,y_{k}^{(n)}]^{T}\in\mathbb{Y}\subseteq\mathbb{R}^{n}$
is a vector of response variables whose relationship with covariates
$z_{k}=(z_{k}^{(i,j)})$, $i=1,\ldots,n$, $j=1,\ldots,p$, is modelled
through $g_{k}(x,y_{k})$ of the form:
\[
g_{k}(x,y_{k})=\exp\left[\sum_{i=1}^{n}\left\{ \sum_{j=1}^{p}y_{k}^{(i)}z_{k}^{(i,j)}x^{(j)}-\psi\left(\sum_{j=1}^{p}z_{k}^{(i,j)}x^{(j)}\right)+\log\phi(y_{k}^{(i)})\right\} \right],
\]
where $x=[x^{(1)}\,\cdots\,x^{(p)}]^{T}$ is the vector of regression
parameters, $\phi$ is a given function, and $\psi$ is convex, so
that $x\mapsto g_{k}(x,y_{k})$ is indeed log-concave. The situation
in which the regression parameter $x$ is treated as time-varying
is known as a Dynamic Generalized Linear Model \cite{harrison1999bayesian}.
An example is discussed in section \ref{sec:General-initial-measures}.

\section{Smoothing distributions and a family of weighted Wasserstein distances\label{sec:General-initial-measures}}

It appears to be a nontrivial matter to extend theorem \ref{thm:singular}
to the case where the filter is initialized from two general probability
measures, say $\mu$ and $\nu$ on $\mathcal{B}(\mathbb{R}^{p})$
instead of only at points $x$ and $x^{\prime}$, in a way which can
yield a contractive bound in terms of $W_{q}(\mu,\nu)$. The difficulty
stems from the fact that the generalization of (\ref{eq:pi_theta_R})
to an arbitrary initial distribution $\mu$ is, with a slight overloading
of the notation $\pi_{k}$ in its first argument: 
\begin{equation}
\pi_{k}(\mu,y_{0:k},A)\coloneqq\mu_{0,k}R_{1,k}R_{2,k}\cdots R_{k,k}(A),\qquad\mu_{0,k}(A)\coloneqq\frac{\mu\cdot\varphi_{0,k}}{\mu\varphi_{0,k}},\label{eq:pi_k_mu_defn}
\end{equation}
where the dependence of $\mu_{0,k}$ on $y_{0:k}$ is not shown in
the notation. A direct corollary of theorem \ref{thm:singular} together
with the identity (\ref{eq:pi_theta_R}) is:
\begin{equation}
W_{q}(\pi_{k}(\mu,y_{0:k},\cdot),\pi_{k}(\nu,y_{0:k},\cdot))\leq\exp\left[-\sum_{j=1}^{k}\int_{0}^{\Delta}\lambda(j,y_{j},t)\mathrm{d}t\right]W_{q}(\mu_{0,k},\nu_{0,k}),\label{eq:wasserstein_mu}
\end{equation}
but even if $\lim_{k\to\infty}\exp\left[-\sum_{j=1}^{k}\int_{0}^{\Delta}\lambda(j,y_{j},t)\mathrm{d}t\right]=0$,
it cannot be deduced immediately from (\ref{eq:wasserstein_mu}) that
the left hand side of (\ref{eq:wasserstein_mu}) converges to zero
as $k\to\infty$ due to the dependence of $W_{q}(\mu_{0,k},\nu_{0,k})$
on $k$ and $y_{0:k}$.

An alternative is to work with a certain family of weighted Wasserstein
distances between filtering distributions. As we shall see, this is
equivalent to establishing forgetting of the initial condition for
so-called\emph{ smoothing} distributions, which unlike filtering distributions
condition on future as well as past and present observations. To describe
this equivalence in more detail we shall need the following lemma.
\begin{lem}
Let $d(\cdot,\cdot)$ be a metric on the set of probability measures
on $\mathcal{B}(\mathbb{R}^{p})$ and let $\phi:\mathbb{R}^{p}\to(0,\infty).$
Then $d^{\phi}(\cdot,\cdot)$ defined by:

\[
d^{\phi}\,:\,(\mu,\nu)\longmapsto d\left(\frac{\mu\cdot\phi}{\mu\phi},\frac{\nu\cdot\phi}{\nu\phi}\right)
\]
 is a metric on the subset of probability measures $\{\mu\,\,\mathrm{on}\,\,\mathcal{B}(\mathbb{R}^{p})\,:\,\mu\phi<\infty\}$.
\end{lem}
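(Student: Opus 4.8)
The plan is to recognize $d^{\phi}$ as the pullback of the metric $d$ under the reweighting map
\[
T:\mu\longmapsto\frac{\mu\cdot\phi}{\mu\phi},
\]
so that $d^{\phi}(\mu,\nu)=d(T\mu,T\nu)$. Once this is set up, three of the four metric axioms will be inherited for free from $d$, and the entire content of the statement will reduce to the injectivity of $T$. First I would check that $T$ is well defined on $\mathcal{M}_{\phi}\coloneqq\{\mu:\mu\phi<\infty\}$. Since $\phi>0$ everywhere, every probability measure satisfies $\mu\phi>0$, and membership in $\mathcal{M}_{\phi}$ gives $\mu\phi<\infty$; hence $0<\mu\phi<\infty$, and $T\mu(A)=(\mu\cdot\phi)(A)/(\mu\phi)$ is a genuine probability measure, being nonnegative and countably additive because $\mu\cdot\phi$ is a finite measure, with $T\mu(\mathbb{R}^{p})=1$ by construction. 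Thus $d^{\phi}(\mu,\nu)=d(T\mu,T\nu)$ is well defined for $\mu,\nu\in\mathcal{M}_{\phi}$.

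Next, because $d$ is a metric, $d^{\phi}$ inherits non-negativity and symmetry immediately from $d^{\phi}(\mu,\nu)=d(T\mu,T\nu)$ and $d(T\nu,T\mu)=d^{\phi}(\nu,\mu)$, and it inherits the triangle inequality from
\[
d^{\phi}(\mu,\rho)=d(T\mu,T\rho)\le d(T\mu,T\nu)+d(T\nu,T\rho)=d^{\phi}(\mu,\nu)+d^{\phi}(\nu,\rho).
\]
The only axiom requiring work is the identity of indiscernibles: I must show that $d^{\phi}(\mu,\nu)=0$ forces $\mu=\nu$. Since $d$ is a metric, $d(T\mu,T\nu)=0$ is equivalent to $T\mu=T\nu$, so the whole statement collapses to proving that $T$ is \emph{injective}.

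For the injectivity step, suppose $T\mu=T\nu$. Setting $c\coloneqq\mu\phi/\nu\phi\in(0,\infty)$, equality of the normalized measures yields $\mu\cdot\phi=c\,(\nu\cdot\phi)$ as finite measures on $\mathcal{B}(\mathbb{R}^{p})$. Here the strict positivity of $\phi$ is decisive: $1/\phi$ is a well-defined, strictly positive measurable function, so for any $A\in\mathcal{B}(\mathbb{R}^{p})$ one recovers
\[
\mu(A)=\int_{A}\phi^{-1}\,\mathrm{d}(\mu\cdot\phi)=c\int_{A}\phi^{-1}\,\mathrm{d}(\nu\cdot\phi)=c\,\nu(A).
\]
Taking $A=\mathbb{R}^{p}$ forces $c=1$, whence $\mu=\nu$. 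The main (indeed only) obstacle is this injectivity claim, and it rests entirely on the hypothesis $\phi>0$, which simultaneously guarantees $\mu\phi>0$ so that the normalization makes sense and supplies the inverse weight $\phi^{-1}$ needed to undo the reweighting; without strict positivity the map $T$ would collapse the mass of $\mu$ on $\{\phi=0\}$ and injectivity would fail.
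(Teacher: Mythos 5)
Your proposal is correct and follows essentially the same route as the paper: the first three axioms are inherited directly from $d$, and the whole content is the injectivity of the reweighting map, which both arguments obtain from the strict positivity of $\phi$. The only cosmetic difference is that you undo the reweighting by integrating $\phi^{-1}$ against $\mu\cdot\phi$ directly, whereas the paper phrases the same inversion in terms of Radon--Nikodym derivatives; both reduce to $\mu=c\,\nu$ with $c=1$ forced by total mass.
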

\begin{proof}
It follows immediately from the assumption that $d$ is a metric and
$\phi$ is strictly positive that on the given domain $\{\mu\,:\,\mu\phi<\infty\}$,
$d^{\phi}$ is nonnegative, symmetric, satisfies the triangle inequality
and $\mu=\nu\Rightarrow d^{\phi}(\mu,\nu)=0$. For the reverse implication,
using the implication $d^{\phi}(\mu,\nu)=0\Rightarrow\mu^{\phi}\coloneqq\frac{\mu\cdot\phi}{\mu\phi}=\frac{\nu\cdot\phi}{\nu\phi}=:\nu^{\phi}$
and the strict positivity of $\phi$ , we have $1=\mathrm{d}\mu^{\phi}/\mathrm{d}\nu^{\phi}=(\mathrm{d}\mu/\mathrm{d}\nu)(\nu\phi/\mu\phi)$,
$\nu$-a.e. Thus $\mathrm{d}\mu/\mathrm{d}\nu$ is a constant $\nu$-a.e.
and since $\mu$ and $\nu$ are probability measures, it follows that
if $d^{\phi}(\mu,\nu)=0$ then $\mu=\nu$.
\end{proof}
Throughout the remainder of section \ref{sec:General-initial-measures}
$(y_{k})_{k\in\mathbb{N}_{0}}$ are an arbitrarily chosen and then
fixed sequence of observations, unless stated otherwise. To avoid
cumbersome formulae, the dependence of some quantities on $(y_{k})_{k\in\mathbb{N}_{0}}$
is not shown in the notation.

Let us introduce the nonnegative integral kernels
\begin{equation}
Q_{k}(x,\mathrm{d}x^{\prime})\coloneqq g_{k-1}(x,y_{k-1})P_{\Delta}(x,\mathrm{d}x^{\prime}).\quad k\geq1.\qquad Q_{j,k}\coloneqq Q_{j+1}\cdots Q_{k},\quad0\leq j<k.\label{eq:Q_defn}
\end{equation}
and the probability measures
\[
\eta_{k}^{\mu}(A)\coloneqq\frac{\mu Q_{0,k}(\mathbf{1}_{A})}{\mu Q_{0,k}(\mathbf{1}_{\mathbb{R}^{p}})},\quad k\geq1,\qquad\eta_{0}^{\mu}\coloneqq\mu,\qquad A\in\mathcal{B}(\mathbb{R}^{p}),
\]
for any $\mu$ such that the denominator is finite. We shall use the
shorthand
\[
\pi_{k}^{\mu}(\cdot)\coloneqq\pi_{k}(\mu,y_{0:k},\cdot).
\]
The dependence of $Q_{k}$ on $y_{k-1}$, of $Q_{j,k}$ on $y_{j},\ldots,y_{k-1}$,
of $\eta_{k}^{\mu}$ on $y_{0},\ldots,y_{k-1}$ and of $\pi_{k}^{\mu}$
on $y_{0},\ldots,y_{k}$ is not shown in the notation. Note from (\ref{eq:pi_k_mu_defn})
that $\eta_{k}^{\mu}(\cdot)=\pi_{k-1}^{\mu}P_{\Delta}(\cdot)$.

We shall use the functions appearing in the following assumption to
define a family of weighted Wasserstein distances.
\begin{assumption}
\label{assu:phi}There exists a probability measure $\mu_{0}$ such
that for the given sequence $(y_{k})_{k\in\mathbb{N}_{0}}$, the following
pointwise limit exists for each $k\in\mathbb{N}_{0}$:
\begin{equation}
\phi_{k,\infty}(x)\coloneqq\lim_{\ell\to\infty}\frac{\varphi_{k,\ell}(x)}{\eta_{k}^{\mu_{0}}\varphi_{k,\ell}},\label{eq:phi_k_infinite_defn-1}
\end{equation}
$\phi_{k,\infty}(x)\in(0,\infty)$ for all $x\in\mathbb{R}^{p}$,
and the functions $(\phi_{k,\infty})_{k\in\mathbb{N}_{0}}$ so-defined
belong to $C^{2}$ and satisfy 
\begin{equation}
Q_{k}\phi_{k,\infty}=\varsigma_{k-1}\phi_{k-1,\infty},\qquad k\geq1,\label{eq:eigen_vector}
\end{equation}
where $\varsigma_{k}\coloneqq\int\eta_{k}^{\mu_{0}}(\mathrm{d}x)g_{k}(x,y_{k})\in(0,\infty)$.
\end{assumption}
Before discussing the interpretation of assumption \ref{assu:phi},
consider the following lemma, which mirrors lemma \ref{lem:varphi_log_concave}.
\begin{lem}
\label{lem:R_infty}If assumption \ref{assu:phi} holds, then for
any $\mu$ such that for all $k\in\mathbb{N}_{0}$, $\pi_{k}^{\mu}P_{\Delta}\phi{}_{k+1,\infty}<\infty$,
the probability measures $(\pi_{k,\infty}^{\mu})_{k\in\mathbb{N}_{0}}$
defined by:
\begin{equation}
\pi_{k,\infty}^{\mu}(A)\coloneqq\frac{\pi_{k}^{\mu}(\mathbf{1}_{A}P_{\Delta}\phi{}_{k+1,\infty})}{\pi_{k}^{\mu}P_{\Delta}\phi{}_{k+1,\infty}},\quad A\in\mathcal{B}(\mathbb{R}^{p}),\label{eq:pi_k_inf_defn}
\end{equation}
satisfy 
\begin{equation}
\pi_{k,\infty}^{\mu}(A)=\pi_{0,\infty}^{\mu}R_{1,\infty}\cdots R_{k,\infty}(A),\label{eq:pi_s_pi}
\end{equation}
with the Markov kernels
\[
R_{k,\infty}(x,\mathrm{d}x^{\prime})\coloneqq\frac{P_{\Delta}(x,\mathrm{d}x^{\prime})\phi{}_{k,\infty}(x^{\prime})}{P_{\Delta}\phi{}_{k,\infty}(x)}.
\]
If additionally assumption \ref{assu:g} holds, then for each $k\in\mathbb{N}_{0},$
there exists a log-concave function $\tilde{\phi}_{k,\infty}$ such
that
\[
\phi{}_{k,\infty}(x)=\exp\left[-\frac{\lambda_{g}(k,y_{k})}{2}\|x\|^{2}\right]\tilde{\phi}_{k,\infty}(x).
\]
\end{lem}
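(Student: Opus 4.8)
The plan is to establish the two assertions separately: the telescoping identity (\ref{eq:pi_s_pi}), which needs only Assumption \ref{assu:phi}, and then the strong log-concavity decomposition, which additionally uses Assumption \ref{assu:g}. For the first, I would reduce everything to the one-step claim that $\pi_{k,\infty}^{\mu}=\pi_{k-1,\infty}^{\mu}R_{k,\infty}$ for every $k\geq1$; iterating this down to $k=0$ (with the empty product read as $\pi_{0,\infty}^{\mu}$) then yields (\ref{eq:pi_s_pi}). The one analytic input beyond bookkeeping is the harmonic relation (\ref{eq:eigen_vector}): since $Q_{k+1}(\theta,\mathrm{d}\vartheta)=g_{k}(\theta,y_{k})P_{\Delta}(\theta,\mathrm{d}\vartheta)$, relation (\ref{eq:eigen_vector}) at index $k+1$ reads $g_{k}(\cdot,y_{k})\,P_{\Delta}\phi_{k+1,\infty}=\varsigma_{k}\,\phi_{k,\infty}$, and this is exactly what collapses the future-information weight $P_{\Delta}\phi_{k+1,\infty}$ in the definition of $\pi_{k,\infty}^{\mu}$ back into $\phi_{k,\infty}$.

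Concretely, I would show both sides of the one-step claim equal $\eta_{k}^{\mu}(\mathbf{1}_{A}\phi_{k,\infty})/\eta_{k}^{\mu}\phi_{k,\infty}$, where $\eta_{k}^{\mu}=\pi_{k-1}^{\mu}P_{\Delta}$. On the right, inserting the definitions of $R_{k,\infty}$ and of $\pi_{k-1,\infty}^{\mu}$ (whose weight is $P_{\Delta}\phi_{k,\infty}$) makes the factor $P_{\Delta}\phi_{k,\infty}(\theta)$ cancel, leaving $\int\pi_{k-1}^{\mu}(\mathrm{d}\theta)\int_{A}P_{\Delta}(\theta,\mathrm{d}\vartheta)\phi_{k,\infty}(\vartheta)$ over its total mass, i.e. $\eta_{k}^{\mu}(\mathbf{1}_{A}\phi_{k,\infty})/\eta_{k}^{\mu}\phi_{k,\infty}$. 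On the left, using the filter update $\pi_{k}^{\mu}=\eta_{k}^{\mu}\cdot g_{k}/(\eta_{k}^{\mu}g_{k})$ rewrites $\pi_{k}^{\mu}(\mathbf{1}_{A}P_{\Delta}\phi_{k+1,\infty})$ as $\eta_{k}^{\mu}(\mathbf{1}_{A}\,g_{k}\,P_{\Delta}\phi_{k+1,\infty})/\eta_{k}^{\mu}g_{k}$, whereupon the harmonic relation turns $g_{k}\,P_{\Delta}\phi_{k+1,\infty}$ into $\varsigma_{k}\phi_{k,\infty}$; the constants $\varsigma_{k}$ and $\eta_{k}^{\mu}g_{k}$ cancel between numerator and denominator, giving the same expression. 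Along the way I would record that the normalizers are finite and positive: $\eta_{k}^{\mu}\phi_{k,\infty}=\pi_{k-1}^{\mu}P_{\Delta}\phi_{k,\infty}<\infty$ is precisely the lemma's hypothesis at index $k-1$, while strict positivity of $g_{k}$ and of $\phi_{k,\infty}$ keeps every denominator nonzero.

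For the decomposition I would start from (\ref{eq:varphi_strongly_log_concave}) of Lemma \ref{lem:varphi_log_concave}: under Assumption \ref{assu:g}, $\varphi_{k,\ell}(\theta)=\exp[-\tfrac{\lambda_{g}(k)}{2}\theta^{T}\theta]\tilde{\varphi}_{k,\ell}(\theta)$ with $\tilde{\varphi}_{k,\ell}$ log-concave. Because the Gaussian prefactor is independent of $\ell$, dividing by the scalar $\eta_{k}^{\mu_{0}}\varphi_{k,\ell}$ and letting $\ell\to\infty$ in (\ref{eq:phi_k_infinite_defn-1}) exhibits $\tilde{\phi}_{k,\infty}(\theta):=\exp[\tfrac{\lambda_{g}(k)}{2}\theta^{T}\theta]\phi_{k,\infty}(\theta)$ as the pointwise limit of the log-concave functions $\theta\mapsto\tilde{\varphi}_{k,\ell}(\theta)/(\eta_{k}^{\mu_{0}}\varphi_{k,\ell})$, finite and strictly positive since $\phi_{k,\infty}\in(0,\infty)$. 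Log-concavity passes to the limit: the defining inequality holds for each $\ell$ and survives the pointwise limit because the limiting function is finite and positive everywhere, so its logarithm is finite and the inequality is preserved.

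I expect the content to lie entirely in the bookkeeping rather than in any single hard step, because Assumption \ref{assu:phi} already packages the difficult analysis (existence of the limit defining $\phi_{k,\infty}$ and the harmonic relation (\ref{eq:eigen_vector})). The step demanding the most care is the cancellation in the one-step identity: one must invoke (\ref{eq:eigen_vector}) at the correct index and track the three normalizing constants $\varsigma_{k}$, $\eta_{k}^{\mu}g_{k}$ and $\eta_{k}^{\mu}\phi_{k,\infty}$, verifying each is finite and nonzero so that every ratio is a bona fide probability measure.
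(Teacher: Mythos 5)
Your argument is correct. For the telescoping identity your proof is essentially the paper's: both reduce to the one-step relation $\pi_{k-1,\infty}^{\mu}R_{k,\infty}=\pi_{k,\infty}^{\mu}$ and both hinge on the same three ingredients, namely the definition of $R_{k,\infty}$, the eigenrelation (\ref{eq:eigen_vector}) at index $k+1$ (i.e. $g_{k}(\cdot,y_{k})P_{\Delta}\phi_{k+1,\infty}=\varsigma_{k}\phi_{k,\infty}$), and the prediction--update form of the filter recursion; the paper chains from $\pi_{k-1,\infty}^{\mu}R_{k,\infty}(A)$ all the way to $\pi_{k,\infty}^{\mu}(A)$ whereas you meet in the middle at $\eta_{k}^{\mu}(\mathbf{1}_{A}\phi_{k,\infty})/\eta_{k}^{\mu}\phi_{k,\infty}$, a cosmetic difference, and your explicit check that the normalizers are finite and nonzero is a small gain in rigour over the paper's silence on this point. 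For the strong log-concavity decomposition you genuinely diverge: the paper first notes $\phi_{k+1,\infty}$ is log-concave as a pointwise limit, applies Lemma \ref{lem:preservation_basic} to get log-concavity of $P_{\Delta}\phi_{k+1,\infty}$, and then reads off $\tilde{\phi}_{k,\infty}=\varsigma_{k}^{-1}\tilde{g}_{k}\,P_{\Delta}\phi_{k+1,\infty}$ from the eigenrelation, so the Gaussian prefactor is inherited from $g_{k}$; you instead push the decomposition (\ref{eq:varphi_strongly_log_concave}) of $\varphi_{k,\ell}$ directly through the limit (\ref{eq:phi_k_infinite_defn-1}), using that the prefactor $\exp[-\tfrac{\lambda_{g}(k)}{2}\theta^{T}\theta]$ is independent of $\ell$ and that a pointwise limit of log-concave functions which is everywhere finite and positive is itself log-concave. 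Both routes are valid; yours needs neither Lemma \ref{lem:preservation_basic} nor the eigenrelation for this step, at the price of applying the limit-preservation of log-concavity to the tilde part rather than only to $\phi_{k+1,\infty}$ itself --- a fact the paper already relies on implicitly, so nothing extra is being assumed.
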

\begin{proof}
To establish (\ref{eq:pi_s_pi}) it suffices to show $\pi_{k-1,\infty}^{\mu}R_{k,\infty}=\pi_{k,\infty}^{\mu}$.
We have
\begin{align*}
\pi_{k-1,\infty}^{\mu}R_{k,\infty}(A) & =\frac{\pi_{k-1}^{\mu}(P_{\Delta}(\phi_{k,\infty})R_{k,\infty}(\mathbf{1}_{A}))}{\pi_{k-1}^{\mu}P_{\Delta}{}_{k}\phi_{k,\infty}}\\
 & =\frac{\pi_{k-1}^{\mu}P_{\Delta}(\mathbf{1}_{A}\phi_{k,\infty})}{\pi_{k-1}^{\mu}P_{\Delta}\phi_{k,\infty}}\\
 & =\frac{\pi_{k-1}^{\mu}P_{\Delta}(\mathbf{1}_{A}Q_{k+1}\phi{}_{k+1,\infty})}{\pi_{k-1}^{\mu}P_{\Delta}Q_{k+1}\phi{}_{k+1,\infty}}\\
 & =\frac{\pi_{k}^{\mu}(\mathbf{1}_{A}P_{\Delta}\phi{}_{k+1,\infty})}{\pi_{k}^{\mu}P_{\Delta}\phi{}_{k+1,\infty}}=\pi_{k,\infty}^{\mu}(A),
\end{align*}
where (\ref{eq:eigen_vector}), (\ref{eq:Q_defn}) and the identity
$\pi_{k}^{\mu}(A)=\pi_{k-1}^{\mu}[P_{\Delta}(\mathbf{1}_{A}Q_{k}(\mathbf{1}_{\mathbb{R}^{p}}))]/\pi_{k-1}^{\mu}[P_{\Delta}(Q_{k}(\mathbf{1}_{\mathbb{R}^{p}}))]$
have been used. 

For the second claim, the fact that $\phi{}_{j,\infty}$ is log-concave
for every $j\in\mathbb{N}_{0}$ follows from its definition as the
pointwise limit in (\ref{eq:phi_k_infinite_defn-1}) and the log-concavity
of $\varphi_{j,k}$ established in lemma \ref{lem:varphi_log_concave}.
By lemma \ref{lem:preservation_basic}, $P_{\Delta}\phi{}_{k+1}$
is log-concave and since by assumption \ref{assu:phi}, $\phi{}_{k,\infty}=\varsigma_{k}^{-1}Q_{k+1}\phi{}_{k+1,\infty}$,
we may take $\tilde{\phi}_{k,\infty}(x)=\varsigma_{k}^{-1}\tilde{g}_{k}(x,y_{k})P_{\Delta}\phi{}_{k+1,\infty}(x)$. 
\end{proof}
Since $\pi_{k}^{\mu}$ has the interpretation of the conditional distribution
of $x_{k\Delta}$ given $(y_{0},\ldots,y_{k})$, the measure $\pi_{k}^{\mu}\cdot(P_{\Delta}\varphi_{k+1,\ell})/\pi_{k}^{\mu}P_{\Delta}\varphi_{k+1,\ell}$
is the so-called smoothing distribution which conditions additionally
on $(y_{k+1},\cdots,y_{k+\ell})$. The interpretation of (\ref{eq:phi_k_infinite_defn-1})
is then that $\phi_{k,\infty}$ is the function with which to re-weight
$\pi_{k}^{\mu}P_{\Delta}$ in order to condition on the infinite data
record $(y_{k+\ell})_{\ell\in\mathbb{N}_{0}}$. Indeed it is clear
from (\ref{eq:pi_k_inf_defn}) that assumption \ref{assu:phi} implies
that the filtering and smoothing measures, $\pi_{k}^{\mu}$ and $\pi_{k,\infty}^{\mu}$,
are equivalent, despite the fact that $\pi_{k,\infty}^{\mu}$ conditions
on an infinite number of observations.

The question of whether there exists a function which achieves this
conditioning is itself closely connected to the question of filter
stability. For a general class of discrete-time filtering problems
with an ergodic signal and nondegenerate observations, it is shown
in \cite[Lemma 3.8]{van2009stability} (see also the commentary immediately
after the proof of Lemma 3.6 in the same article), that the transition
kernel of the signal conditional on an infinite future data record
is absolutely continuous w.r.t. to the (unconditional) transition
kernel of the signal. In the notation of the present work this is,
for each $k$, the absolute continuity of $R_{k,\infty}(x,\cdot)$
w.r.t. $P_{\Delta}(x,\cdot)$, i.e. $\phi_{k,\infty}$ is (a version
of) the corresponding Radon-Nikodym derivative up to a factor depending
on $x$. See \cite{whiteley2014twisted} for a discussion on doubly
infinite time horizons but under much more restrictive conditions.
Assumption 2 requires such a derivative to not only exist but also
satisfy certain regularity conditions, which below shall be verified
in the setting of a specific example using the techniques of \cite{whiteley2013stability}.
It is an open question whether assumption \ref{assu:phi} can be deduced
directly from theorem \ref{thm:singular}.

When assumption \ref{assu:phi} holds, we shall consider the family
of weighted Wasserstein distances
\[
W_{q,k}(\mu,\nu)\coloneqq W_{q}\left(\frac{\mu\cdot P_{\Delta}\phi{}_{k+1,\infty}}{\mu P_{\Delta}\phi{}_{k+1,\infty}},\frac{\nu\cdot P_{\Delta}\phi{}_{k+1,\infty}}{\nu P_{\Delta}\phi{}_{k+1,\infty}}\right),\qquad k\in\mathbb{N}_{0},
\]
whenever $\mu,\nu$ satisfy appropriate integrability conditions for
these distances to be well-defined. The interest in the distances
$W_{q,k}$ is due to the identity:
\begin{equation}
W_{q,k}(\pi_{k}^{\mu},\pi_{k}^{\nu})=W_{q}(\pi_{k,\infty}^{\mu},\pi_{k,\infty}^{\nu}),\label{eq:wass_id}
\end{equation}
which follows from (\ref{eq:pi_k_inf_defn}). Thus $W_{q,k}$ quantifies
distance between $\pi_{k}^{\mu}$ and $\pi_{k}^{\nu}$ as the $W_{q}$-distance
between the corresponding smoothing distributions $\pi_{k,\infty}^{\mu}$
and $\pi_{k,\infty}^{\nu}$.

We denote the set of probability measures
\[
\mathcal{P}_{q}\coloneqq\left\{ \mu\,\,\mathrm{on}\,\,\mathcal{B}(\mathbb{R}^{p})\,:\,\int(1+\|u\|^{q})\phi{}_{0}(u)\mu(\mathrm{d}u)<\infty\quad\mathrm{and}\quad\pi_{k}^{\mu}P_{\Delta}\phi{}_{k+1,\infty}<\infty,\;\forall k\in\mathbb{N}_{0}\right\} .
\]

\begin{thm}
\label{thm:general}If assumption \ref{assu:g} holds and for a given
observation sequence $(y_{k})_{k\in\mathbb{N}_{0}}$ assumption\ref{assu:phi}
holds, then for any $q\geq1$,
\[
W_{q,k}(\pi_{k}(\mu,y_{0:k},\cdot),\pi_{k}(\nu,y_{0:k},\cdot))\leq\exp\left[-\sum_{j=1}^{k}\int_{0}^{\Delta}\lambda(j,y_{j},t)\mathrm{d}t\right]W_{q,0}(\pi_{0}^{\mu},\pi_{0}^{\nu}),\qquad\forall k\geq1,\mu,\nu\in\mathcal{P}_{q},
\]
 where $\lambda(j,y_{j},t)$ is as in theorem \ref{thm:singular}. 
\end{thm}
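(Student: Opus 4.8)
The plan is to reduce the claim to a chaining of one-step Wasserstein contractions for the Markov kernels $R_{j,\infty}$, each of which is handled by re-running the $h$-process argument of Sections \ref{subsec:A-space-time--transform} and \ref{subsec:Quantifying-log-concavity-of} with $\varphi_{j,k}$ replaced by $\phi_{j,\infty}$. By the identity (\ref{eq:wass_id}), the weighted distances on both sides of the asserted bound equal ordinary Wasserstein distances between smoothing measures: the left-hand side is $W_q(\pi_{k,\infty}^{\mu},\pi_{k,\infty}^{\nu})$ and the factor $W_{q,0}(\pi_0^{\mu},\pi_0^{\nu})$ is $W_q(\pi_{0,\infty}^{\mu},\pi_{0,\infty}^{\nu})$. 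By the factorization (\ref{eq:pi_s_pi}) in Lemma \ref{lem:R_infty}, $\pi_{k,\infty}^{\mu}$ and $\pi_{k,\infty}^{\nu}$ are the images of $\pi_{0,\infty}^{\mu}$ and $\pi_{0,\infty}^{\nu}$ under the \emph{same} composition $R_{1,\infty}\cdots R_{k,\infty}$. So it suffices to prove, for each $1\le j\le k$, the point-wise estimate
\[
W_q(R_{j,\infty}(\theta,\cdot),R_{j,\infty}(\vartheta,\cdot))\le\exp\left[-\int_0^{\Delta}\lambda(j,t)\,\mathrm{d}t\right]\|\theta-\vartheta\|,
\]
and then to pass from this to the corresponding bound between measures.

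For the per-kernel contraction, observe that $R_{j,\infty}$ has exactly the structure treated in Theorem \ref{thm:singular}: it is $P_{\Delta}$ re-weighted by the strictly positive function $\phi_{j,\infty}$, which by the second part of Lemma \ref{lem:R_infty} admits the strong-log-concavity decomposition $\phi_{j,\infty}(\theta)=\exp[-\tfrac{\lambda_g(j)}{2}\theta^{T}\theta]\tilde{\phi}_{j,\infty}(\theta)$ with $\tilde{\phi}_{j,\infty}$ log-concave, i.e.\ the same form as (\ref{eq:varphi_strongly_log_concave}) with the same parameter $\lambda_g(j)$. The $C^2$ regularity and the finiteness of $P_{\Delta}\phi_{j,\infty}$ needed to run the argument are furnished by Assumption \ref{assu:phi}. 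I would therefore set $h(\theta,t)\coloneqq P_{\Delta-t}\phi_{j,\infty}(\theta)$ and reproduce Lemma \ref{lem:transformation} verbatim: the same martingale $h$-transform identifies $R_{j,\infty}$ with $\mathbf{P}^{h}(\theta_0,\{\theta_{\Delta}\in\cdot\})$ and produces the generator $L^{h}$, with $L(h)=0$ holding because $(P_t)$ is a semigroup. Applying Proposition \ref{prop:quantitative log-concave} with $\phi_{j,\infty}$ in the role of $\varphi_{j,k}$ gives $h(\theta,t)=\exp[-\tfrac{\lambda_h(t)}{2}\theta^{T}\theta]\tilde{h}(\theta,t)$ with the same $\lambda_h$, and Proposition \ref{prop:wasser_R} then delivers the displayed one-step rate, since $\lambda_{\mathrm{sig}}\Delta+\sigma^{2}\int_0^{\Delta}\lambda_h(t)\,\mathrm{d}t=\int_0^{\Delta}\lambda(j,t)\,\mathrm{d}t$.

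It remains to lift the point-wise contraction to the level of measures and chain over $j=1,\ldots,k$. For a single kernel this is the standard fact that a Markov kernel which is $L$-Lipschitz in the $W_q$ sense contracts $W_q$ between any two input measures by the factor $L$: one takes an optimal coupling of the inputs and, for each pair of starting points, the synchronous coupling of Proposition \ref{prop:wasser_R}, which depends measurably (through the SDE flow) on $(\theta_0,\vartheta_0)$; integrating the per-point cost bound against the optimal input coupling yields a coupling of the outputs achieving the factor $L$. Iterating across the composition multiplies the factors and gives
\[
W_q(\pi_{k,\infty}^{\mu},\pi_{k,\infty}^{\nu})\le\prod_{j=1}^{k}\exp\left[-\int_0^{\Delta}\lambda(j,t)\,\mathrm{d}t\right]W_q(\pi_{0,\infty}^{\mu},\pi_{0,\infty}^{\nu}),
\]
which is the assertion once both distances are rewritten via (\ref{eq:wass_id}). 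The mathematical content being a direct transcription of Section \ref{subsec:A-space-time--transform} and Section \ref{subsec:Quantifying-log-concavity-of}, the main obstacle I anticipate is purely the measure-theoretic bookkeeping: one must check that the integrability conditions encoded in $\mathcal{P}_q$ and in Lemma \ref{lem:R_infty} ensure that each intermediate $\pi_{j,\infty}^{\mu},\pi_{j,\infty}^{\nu}$ is a well-defined probability measure with finite $q$-th moment, so that every $W_q$ in the chain is finite and the gluing of couplings is legitimate.
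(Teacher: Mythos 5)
Your proposal is correct and follows essentially the same route as the paper, which itself only sketches the argument: convert the weighted distances to ordinary $W_q$ between the smoothing measures via (\ref{eq:wass_id}), factor those through the kernels $R_{j,\infty}$ via (\ref{eq:pi_s_pi}), and rerun Lemma \ref{lem:transformation} and Propositions \ref{prop:wasser_R} and \ref{prop:quantitative log-concave} with $\phi_{j,\infty}$ (whose strong log-concavity with the same parameter $\lambda_g(j)$ is supplied by Lemma \ref{lem:R_infty}) in place of $\varphi_{j,k}$. Your additional care over lifting the pointwise contraction to arbitrary input measures and over the integrability encoded in $\mathcal{P}_q$ is exactly the bookkeeping the paper delegates to the reader.
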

Given the identities (\ref{eq:pi_s_pi}) and (\ref{eq:wass_id}),
the proof of theorem \ref{thm:general} follows almost exactly the
same programme as the proof of theorem \ref{thm:singular}, except
working with the kernels $R_{k,\infty}$, the functions $\phi{}_{k,\infty}$
and their log-concavity in lemma \ref{lem:R_infty}, instead of $R_{j,k}$,
$\varphi_{j,k}$ and their log-concavity in lemma \ref{lem:varphi_log_concave}.
Therefore the details are omitted. The requirement $\mu,\nu\in\mathcal{P}_{q}$
ensures that $W_{q,0}(\mu,\nu)$ and $\pi_{k,\infty}^{\mu},\pi_{k,\infty}^{\nu}$
are well-defined. 

\subsection*{Example: dynamic logistic regression}

As an example of the dynamic Generalized Linear Models described in
section \ref{subsec:Examples}, consider the case: $\sigma^{2}>0$,
$\beta$ such that $\lambda_{\mathrm{sig}}>0$, and with $\mathbb{Y}=\{0,1\}^{n}$,
the observations $Y_{k}=[Y_{k}^{(1)}\,\cdots\,Y_{k}^{(n)}]^{T}$ are
conditionally independent given $x_{k\Delta}$, with the conditional
probability of $\{Y_{k}^{i}=1\}$ being $1/(1+e^{-\sum_{j}x_{k\Delta}^{(j)}z_{k}^{(i,j)}})$,
where $z_{k}^{(i,j)}$ are known covariates. The likelihood function
at time $k$ is then: 
\[
g_{k}(x,y_{k})=\exp\left[\sum_{i=1}^{n}\left\{ \sum_{j=1}^{p}y_{k}^{(i)}z_{k}^{(i,j)}x^{(j)}-\log\left(1+e^{\sum_{j=1}^{p}z_{k}^{(i,j)}x^{(j)}}\right)\right\} \right].
\]

For any $(y_{k})_{k\in\mathbb{N}_{0}}$, assumption \ref{assu:g}
is satisfied with $\lambda_{g}(k,y_{k})=0$, and therefore (\ref{eq:filter_stable_signal})
holds by theorem \ref{thm:singular}. Checking assumption \ref{assu:phi}
is more involved, we shall use some results from \cite{whiteley2013stability}. 

Let us assume that the covariates satisfy
\begin{equation}
\sup_{k\geq0}\sum_{i,j}(z_{k}^{(i,j)})^{2}<\infty,\label{eq:covariates}
\end{equation}
and fix an arbitrarily sequence of observations $(y_{k})_{k\in\mathbb{N}_{0}}$.

The following properties of this model are easily checked (see \cite[Sec. 3.1]{whiteley2013stability}
for a similar example): there exists a constant $c>0$ such that with
\begin{equation}
V(x)\coloneqq1+c\|x\|,\qquad C_{d}\coloneqq\{x\in\mathbb{R}^{p}:V(x)\leq d\},\label{eq:V_defn}
\end{equation}
we have for some $\underline{d}\in[1,\infty)$ and all $d\geq\underline{d}$,
\begin{itemize}
\item $\sup_{k}g_{k}(x,y_{k})\leq1$, $\forall x\in\mathbb{R}^{p}$, and
there exist constants $\delta\in(0,1)$, $b_{d}\in[0,\infty)$ such
that 
\begin{equation}
P_{\Delta}(e^{V})\leq\exp(V(1-\delta)+b_{d}\mathbf{1}_{C_{d}}),\label{eq:drift}
\end{equation}
\item $\inf_{k}g_{k}(x,y_{k})P_{\Delta}(x,C_{d})>0$, $\forall x\in\mathbb{R}^{p}$,
\item there exist constants $\epsilon_{d}^{-},\epsilon_{d}^{+}$ such that
$\forall x\in C_{d}$ and $k\in\mathbb{N}_{0}$,
\[
\epsilon_{d}^{-}\nu_{d}(\mathrm{d}x^{\prime})\mathbf{1}_{C_{d}}(x^{\prime})\leq g_{k}(x,y_{k})P_{\Delta}(x,\mathrm{d}x^{\prime})\mathbf{1}_{C_{d}}(x^{\prime})\leq\epsilon_{d}^{-}\nu_{d}(\mathrm{d}x^{\prime})\mathbf{1}_{C_{d}}(x^{\prime}),
\]
where the probability measure $\nu_{d}$ is the normalized restriction
of Lebesgue measure to $C_{d}$.
\end{itemize}
Define the norm on functions $f:\mathbb{R}^{p}\to\mathbb{R}$, $\|f\|_{e^{V}}\coloneqq\sup_{x}|f(x)|/e^{V(x)}$.
\begin{prop}
\label{prop:L_v}For any $\mu_{0}$ such that $\mu_{0}(e^{V})<\infty$,
define $\phi{}_{j,k}(x)\coloneqq\varphi_{j,k}(x)/\pi_{j-1}^{\mu_{0}}P_{\Delta}\varphi_{j,k}$
. Then:

1) $\sup_{k\geq0}\eta_{k}^{\mu_{0}}(e^{V})<\infty$

2) $\sup_{0\leq j\leq k}\|\phi{}_{j,k}\|_{e^{V}}<\infty$,

3) for all $d\geq\underline{d}$, $\inf_{0\leq j\leq k}\inf_{x\in C_{d}}\phi_{j,k}(x)>0$,

4) for all $0<j\leq k$ , $Q_{j}\phi{}_{j,k}=\varsigma_{j-1}\phi{}_{j-1,k}$,
where $\varsigma_{j}=\int\eta_{j}^{\mu_{0}}(\mathrm{d}x)g_{j}(x,y_{j})$,

5) there exist constants $\rho<1$ and $c_{\mu_{0}}<\infty$ such
that for any $f:\mathbb{R}^{p}\to\mathbb{R}$ with $\|f\|_{e^{V}}<\infty$,
\[
\left|\frac{Q_{j,k}f(x)}{\prod_{i=j}^{k-1}\varsigma_{i}}-\phi{}_{j,k-1}(x)\eta_{k}^{\mu_{0}}f\right|\leq\rho^{k-j}\|f\|_{e^{V}}c_{\mu_{0}}e^{V(x)}\mu_{0}(e^{V}),\quad\forall x\in\mathbb{R}^{p},\,0\leq j<k
\]
\end{prop}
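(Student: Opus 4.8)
The plan is to recognize Proposition~\ref{prop:L_v} as an instance of the geometric ergodicity theory for time-inhomogeneous nonnegative (Feynman--Kac) kernels developed in \cite{whiteley2013stability}, applied to the sequence $(Q_k)_{k\geq1}$ of (\ref{eq:Q_defn}). The listed bullet-point properties are exactly the two ingredients that theory requires: a multiplicative Foster--Lyapunov drift toward a small set, namely (\ref{eq:drift}) combined with $\sup_k g_k\leq1$, which gives $Q_k(e^V)\leq e^{(1-\delta)V+b_d\mathbf{1}_{C_d}}$; and a two-sided local minorization of $g_kP_\Delta$ on $C_d$ together with accessibility $\inf_k g_k(\theta,y_k)P_\Delta(\theta,C_d)>0$. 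The first thing I would check is that, thanks to (\ref{eq:covariates}) and the $\sup_k/\inf_k$ form in which the properties are stated, all the constants $\delta,b_d,\epsilon_d^{\pm}$ are uniform in $k$; this uniformity is precisely what will make the conclusions hold with $\rho,c_{\mu_0}$ independent of the observation sequence.

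I would dispose of the two elementary items first. Part~4 is purely algebraic: from the definition $\phi_{j,k}=\varphi_{j,k}/\eta_j^{\mu_0}\varphi_{j,k}$ (recall $\eta_j^{\mu_0}=\pi_{j-1}^{\mu_0}P_\Delta$) and the recursion (\ref{eq:phi_j_defn}) giving $\varphi_{j-1,k}=g_{j-1}P_\Delta\varphi_{j,k}$, one has $Q_j\phi_{j,k}=g_{j-1}P_\Delta\varphi_{j,k}/\eta_j^{\mu_0}\varphi_{j,k}=\varphi_{j-1,k}/\eta_j^{\mu_0}\varphi_{j,k}$; the filter recursion $\pi_{j-1}^{\mu_0}(\mathrm{d}\theta)=\eta_{j-1}^{\mu_0}(\mathrm{d}\theta)g_{j-1}(\theta)/\varsigma_{j-1}$ then yields $\eta_j^{\mu_0}\varphi_{j,k}=\varsigma_{j-1}^{-1}\eta_{j-1}^{\mu_0}\varphi_{j-1,k}$, so the ratio $Q_j\phi_{j,k}/\phi_{j-1,k}$ collapses to $\varsigma_{j-1}$, which is (\ref{eq:eigen_vector}) at finite horizon. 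For Part~1 I would run the drift through the normalized recursion $\eta_k^{\mu_0}(e^V)=\eta_{k-1}^{\mu_0}(g_{k-1}P_\Delta e^V)/\eta_{k-1}^{\mu_0}(g_{k-1})$: the numerator is controlled above by (\ref{eq:drift}) and $g\leq1$, the denominator below by the accessibility bound together with the resulting moment control, so a short induction keeps $\sup_k\eta_k^{\mu_0}(e^V)$ finite.

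The substance is in Parts~2, 3 and 5, and I would treat Part~5 as the master estimate from which the other two fall out. Part~5 asserts that the normalized semigroup $Q_{j,k}/\prod_i\varsigma_i$ converges, geometrically in $k-j$ and uniformly in the $e^V$-norm, to the rank-one projection $f\mapsto\phi_{j,k-1}\,\eta_k^{\mu_0}f$; this is precisely a time-inhomogeneous multiplicative ergodic theorem. The route I would follow is the one in \cite{whiteley2013stability}: use the minorization on $C_d$ to obtain a one-step contraction of the normalized kernel in a Hilbert projective / $V$-weighted metric for mass carried by the small set, and use the multiplicative drift to show the flow returns to $C_d$ geometrically fast, so that this single-step contraction propagates to a genuine contraction on all of $\mathbb{R}^p$. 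Iterating produces a Cauchy sequence of normalized kernels whose limit defines $\phi_{j,k-1}$ and the measure $\eta_k^{\mu_0}$ and supplies the rate $\rho<1$. The uniform bound $\sup_{j\leq k}\|\phi_{j,k}\|_{e^V}<\infty$ of Part~2 and the uniform positivity $\inf_{j\leq k}\inf_{C_d}\phi_{j,k}>0$ of Part~3 then read off from the same fixed-point construction, the former from the drift and $g\leq1$ (which bound the numerator while the accessibility floor on $\eta_j^{\mu_0}\varphi_{j,k}$ bounds the normalizer below), the latter from the minorization, which forces $\phi_{j,k}$ to inherit a strictly positive floor on $C_d$.

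I expect the main obstacle to be exactly the propagation step inside Part~5: transferring the one-step projective contraction available only on the small set $C_d$ into a contraction of the full normalized semigroup, while the kernels $Q_k$ genuinely lose mass and are time-dependent. Controlling the excursions away from $C_d$ through the multiplicative drift, and doing so with constants that do not depend on $k$ and hence not on the observations, is the delicate point; everything else is bookkeeping built on the uniform constants secured at the outset.
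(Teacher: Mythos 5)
Your proposal takes essentially the same route as the paper, whose entire proof consists of observing that the bullet-point properties together with $\mu_0(e^V)<\infty$ verify conditions (H1)--(H4) of \cite{whiteley2013stability} and then citing Propositions 1 and 2, Lemmas 1 and 10, and Theorem 1 of that reference for parts 1)--5) respectively. The extra detail you sketch (the algebra behind part 4, the drift-plus-minorization contraction argument behind part 5) is precisely the internal content of those cited results, so there is no divergence in approach.
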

\begin{proof}
The properties identified immediately before the statement of proposition
and the requirement $\mu_{0}(e^{V})<\infty$ imply that conditions
(H1)-(H4) of \cite{whiteley2013stability} are satisfied. Then 1)
and 2) are established by \cite[Prop. 1 and 2]{whiteley2013stability},
3) by \cite[Lem. 10]{whiteley2013stability}, 4) by \cite[Lem.1]{whiteley2013stability},
and 5) by \cite[Thm. 1]{whiteley2013stability}. 

The following proposition establishes that the conditions of theorem
\ref{thm:general} are satisfied. 
\end{proof}
\begin{prop}
\label{prop:wass_logit}For any sequence of observations $(y_{k})_{k\in\mathbb{N}_{0}}$,
the dynamic logistic regression model described above satisfies assumption
\ref{assu:phi} with $\sup_{k\geq0}\|\phi_{k,\infty}\|_{e^{V}}<\infty$,
and for any $q\geq1$,
\begin{equation}
W_{q,k}(\pi_{k}(\mu,y_{0:k},\cdot),\pi_{k}(\nu,y_{0:k},\cdot))\leq\exp\left(-k\Delta\lambda_{\mathrm{sig}}\right)W_{q,0}(\pi_{0}^{\mu},\pi_{0}^{\nu}),\label{eq:wass_bound_logit}
\end{equation}
 for all $\mu,\nu$ in the set of probability measures $\left\{ \mu\;\mathrm{on}\;\mathcal{B}(\mathbb{R}^{p})\;:\;\int(1+\|x\|^{q})e^{c\|x\|}\mu(\mathrm{d}x)<\infty\right\} $
where $c$ is as in (\ref{eq:V_defn}).
\end{prop}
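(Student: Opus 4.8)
The plan is to read the contraction estimate (\ref{eq:wass_bound_logit}) off Theorem \ref{thm:general} once its hypotheses have been checked, so essentially all of the work lies in verifying Assumption \ref{assu:phi}. Assumption \ref{assu:g} holds with $\lambda_{g}(k)=0$ since each $\theta\mapsto\log(1+e^{\sum_{j}x_{k}^{ij}\theta^{j}})$ is convex, making $\theta\mapsto g_{k}(\theta,y_{k})$ log-concave, strictly positive and smooth. Having checked (from the bulleted drift and minorization properties stated before Proposition \ref{prop:L_v}) that the model meets the hypotheses of that proposition, I would treat its parts 1)--5) as the available toolkit. The normalisations match: because $\eta_{k}^{\mu_{0}}=\pi_{k-1}^{\mu_{0}}P_{\Delta}$, the finite-horizon function $\phi_{k,\ell}=\varphi_{k,\ell}/\pi_{k-1}^{\mu_{0}}P_{\Delta}\varphi_{k,\ell}$ of Proposition \ref{prop:L_v} coincides with the ratio inside the limit (\ref{eq:phi_k_infinite_defn-1}), so it suffices to show $\phi_{k,\ell}$ converges as $\ell\to\infty$ and that the limit is well-behaved.

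For the convergence I would use the identity $\varphi_{k,\ell}=Q_{k,\ell+1}\mathbf{1}$, immediate from (\ref{eq:Q_defn}). Applying part 5) of Proposition \ref{prop:L_v} once with $f=\mathbf{1}$ and once with $f=g_{\ell+1}(\cdot,y_{\ell+1})$ -- both of $e^{V}$-norm at most that of $\mathbf{1}$ since $g\leq1$ -- shows that $\varphi_{k,\ell}/\prod_{i=k}^{\ell}\varsigma_{i}$ and $\varphi_{k,\ell+1}/\prod_{i=k}^{\ell+1}\varsigma_{i}$ both equal $\phi_{k,\ell}(\theta)$ up to errors of order $\rho^{\ell-k}e^{V(\theta)}$, the second after dividing by $\varsigma_{\ell+1}=\eta_{\ell+1}^{\mu_{0}}g_{\ell+1}$. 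Since $\eta_{k}^{\mu_{0}}\phi_{k,\ell}=1$ by construction, normalising cancels the products $\prod\varsigma_{i}$ and yields, for consecutive horizons,
\[
\phi_{k,\ell+1}(\theta)-\phi_{k,\ell}(\theta)=\frac{E_{\ell}(\theta)-\phi_{k,\ell+1}(\theta)\,\eta_{k}^{\mu_{0}}E_{\ell}}{\varsigma_{\ell+1}},\qquad|E_{\ell}(\theta)|\leq C\rho^{\ell-k}e^{V(\theta)}.
\]
The remaining point is a uniform lower bound $\inf_{\ell}\varsigma_{\ell}>0$: on a sublevel set $C_{d}$ the bounded-covariate condition (\ref{eq:covariates}) forces $g_{\ell}\geq\epsilon_{0}>0$ uniformly in $\ell$, while part 1) of Proposition \ref{prop:L_v} with Markov's inequality gives $\inf_{\ell}\eta_{\ell}^{\mu_{0}}(C_{d})\geq1/2$ for $d$ large, so $\varsigma_{\ell}\geq\epsilon_{0}/2$. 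With this and the uniform bound $\phi_{k,\ell+1}\leq\|\phi\|_{e^{V}}e^{V}$ from part 2), the displayed increments are summable in $\ell$, $(\phi_{k,\ell})_{\ell}$ is Cauchy in $\|\cdot\|_{e^{V}}$, and the pointwise limit $\phi_{k,\infty}$ of (\ref{eq:phi_k_infinite_defn-1}) exists with $\sup_{k}\|\phi_{k,\infty}\|_{e^{V}}\leq\sup_{0\leq j\leq k}\|\phi_{j,k}\|_{e^{V}}<\infty$.

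It then remains to upgrade this limit to the full content of Assumption \ref{assu:phi}. Passing $\ell\to\infty$ in part 4), $Q_{j}\phi_{j,\ell}=\varsigma_{j-1}\phi_{j-1,\ell}$ -- the left side converging by dominated convergence, using $\phi_{j,\ell}\leq\|\phi\|_{e^{V}}e^{V}$ uniformly in $\ell$ and $P_{\Delta}e^{V}<\infty$ from (\ref{eq:drift}) -- yields the eigen-relation (\ref{eq:eigen_vector}). The eigen-relation in turn supplies both regularity and strict positivity: writing $\phi_{k,\infty}=\varsigma_{k}^{-1}g_{k}\,P_{\Delta}\phi_{k+1,\infty}$, the Gaussian transition $P_{\Delta}$ acts by convolution against a smooth rapidly-decaying kernel, so $P_{\Delta}\phi_{k+1,\infty}\in C^{\infty}$ (differentiation under the integral being justified by the $e^{V}$ growth bound), and multiplication by $g_{k}\in C^{2}$ keeps $\phi_{k,\infty}\in C^{2}$; positivity of $P_{\Delta}\phi_{k+1,\infty}$ follows from the full support of $P_{\Delta}$ together with $\phi_{k+1,\infty}>0$ on $C_{d}$ (part 3)), and $g_{k}>0$ then gives $\phi_{k,\infty}\in(0,\infty)$ everywhere.

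With Assumption \ref{assu:phi} in hand, and $\lambda_{g}(k)\equiv0$, $\beta=-\lambda_{\mathrm{sig}}I$, the rate in Theorem \ref{thm:singular} collapses to $\lambda(j,t)=\lambda_{\mathrm{sig}}$, so $\sum_{j=1}^{k}\int_{0}^{\Delta}\lambda(j,t)\,\mathrm{d}t=k\Delta\lambda_{\mathrm{sig}}$ and Theorem \ref{thm:general} delivers (\ref{eq:wass_bound_logit}); membership of the stated measures in $\mathcal{P}_{q}$ follows from $\phi_{0,\infty}\leq\|\phi_{0,\infty}\|_{e^{V}}e^{1+c\|\cdot\|}$ for the first defining condition and, for the second, from $P_{\Delta}\phi_{k+1,\infty}\lesssim e^{V}$ via (\ref{eq:drift}) together with $\pi_{k}^{\mu}(e^{V})<\infty$ (using $g\leq1$ and $\mu(e^{V})<\infty$). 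I expect the main obstacle to be precisely the convergence step: turning the weighted-supremum geometric ergodicity of part 5) into genuine convergence of the normalised $\phi_{k,\ell}$ requires the normalisation bookkeeping above and the uniform lower bound on $\varsigma_{\ell}$, and the subsequent recovery of $C^{2}$ regularity and global positivity of the limit rests on the smoothing action of $P_{\Delta}$ in the eigen-relation rather than on the pointwise limit alone.
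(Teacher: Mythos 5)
Your proposal is correct and follows essentially the same programme as the paper: verify Assumption \ref{assu:phi} by showing that $(\phi_{k,\ell})_{\ell}$ is Cauchy in the Banach space of functions with finite $\|\cdot\|_{e^{V}}$-norm using Proposition \ref{prop:L_v}, pass to the limit in the eigen-relation of part 4), recover strict positivity and $C^{2}$ regularity of $\phi_{k,\infty}$ from that relation via part 3) and the Gaussian form of $P_{\Delta}$, and then read (\ref{eq:wass_bound_logit}) off Theorem \ref{thm:general} with $\lambda_{g}(k)=0$ and $\beta=-\lambda_{\mathrm{sig}}I$. The one step where you genuinely diverge is the Cauchy estimate: the paper compares $\phi_{j,k}$ with $\phi_{j,k+\ell}$ for arbitrary $\ell$ in a single application of part 5), applied to the function $1-Q_{k+1,k+\ell+1}\mathbf{1}_{\mathbb{R}^{p}}/\prod_{i=k+1}^{k+\ell}\varsigma_{i}$, which has $\eta_{k+1}^{\mu_{0}}$-mean zero so that the limit term in part 5) vanishes and one gets $\|\phi_{j,k}-\phi_{j,k+\ell}\|_{e^{V}}\leq\rho^{k+1-j}c_{Q}c_{\mu_{0}}\mu_{0}(e^{V})$ uniformly in $\ell$, with no further input. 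Your telescoping over consecutive horizons (taking $f=g_{\ell+1}$ in part 5) and dividing by $\varsigma_{\ell+1}$) also works, but it forces you to prove the auxiliary uniform lower bound $\inf_{\ell}\varsigma_{\ell}>0$; your argument for that bound (uniform positivity of $g_{\ell}$ on $C_{d}$ from (\ref{eq:covariates}) plus $\inf_{\ell}\eta_{\ell}^{\mu_{0}}(C_{d})\geq1/2$ for large $d$ via part 1) and Markov's inequality) is sound, so nothing is broken, but the paper's choice of a mean-zero test function makes this extra lemma unnecessary and gives the uniform-in-$\ell$ bound directly rather than through summing a geometric series.
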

\begin{rem}
The constant $\rho<1$ appearing in part 5) of proposition \ref{prop:L_v}
and obtained using the techniques of \cite{whiteley2013stability}
may degrade with dimension of the state-space. Note however, that
$\rho$ does not appear in (\ref{eq:wass_bound_logit}), it only serves
as an intermediate tool used to in the following proof to help establish
that assumption \ref{assu:phi} holds.
\end{rem}
\begin{proof}
[Proof of proposition \ref{prop:wass_logit}]Choose any $\mu_{0}$
such that $\mu_{0}(e^{V})<\infty$. Noting the identities $\pi_{k-1}^{\mu_{0}}P_{\Delta}\varphi_{k,\ell}=\prod_{j=k}^{\ell}\varsigma_{j}$
and $\phi{}_{j,k}=Q_{j,k+1}\mathbf{1}_{\mathbb{R}^{p}}/\prod_{i=j}^{k}\varsigma_{i}$,
we have for any $\ell\geq1$,
\[
\phi{}_{j,k}-\phi{}_{j,k+\ell}=\frac{Q_{j,k+1}}{\prod_{i=j}^{k}\varsigma_{i}}\left(1-\frac{Q_{k+1,k+\ell+1}\mathbf{1}_{\mathbb{R}^{p}}}{\prod_{i=k+1}^{k+\ell}\varsigma_{i}}\right).
\]
Since $\prod_{i=k+1}^{k+\ell}\varsigma_{i}=\eta_{k+1}^{\mu_{0}}Q_{k+1,k+\ell+1}\mathbf{1}_{\mathbb{R}^{p}}$,
we have $\eta_{k+1}^{\mu_{0}}(1-\frac{Q_{k+1,k+\ell+1}\mathbf{1}_{\mathbb{R}^{p}}}{\prod_{i=k+1}^{k+\ell}\varsigma_{i}})=0$
and by part 2) of proposition \ref{prop:L_v}, $\sup_{j,k,\ell}\frac{\|Q_{k+1,k+\ell+1}\mathbf{1}_{\mathbb{R}^{p}}\|_{e^{V}}}{\prod_{i=k+1}^{k+\ell}\varsigma_{i}}=:c_{Q}<\infty$,
so an application of part 5) of proposition \ref{prop:L_v} gives:
\[
\|\phi{}_{j,k}-\phi{}_{j,k+\ell}\|_{e^{V}}\leq\rho^{k+1-j}c_{Q}c_{\mu_{0}}\mu_{0}(e^{V}),\quad\forall\ell\geq1.
\]
It follows for each $j$, $(\phi{}_{j,k})_{k\geq j}$ is a Cauchy
sequence in the Banach space of functions $f:\mathbb{R}^{p}\rightarrow\mathbb{R}$
endowed with the norm $\|f\|_{e^{V}}<+\infty$. With the strong limit
of $(\phi{}_{j,k})_{k\geq j}$ then denoted $\phi{}_{j,\infty},$
we have $\|\phi{}_{j,\infty}\|_{e^{V}}<\infty$ and $\phi{}_{j,\infty}(x)=\lim_{k\to\infty}\phi{}_{j,k}(x)$
pointwise. 

From part 4) of proposition \ref{prop:L_v}, 
\[
Q_{j}\phi{}_{j,k}=Q_{j}\phi{}_{j,\infty}+Q_{j}(\phi{}_{j,k}-\phi{}_{j,\infty})=\varsigma_{j-1}\phi{}_{j-1,\infty}+\varsigma_{j-1}(\phi{}_{j-1,k}-\phi{}_{j-1,\infty})=\varsigma_{j-1}\phi{}_{j-1,k},
\]
and since using (\ref{eq:drift}), $\|Q_{j}(e^{V})\|_{e^{V}}<\infty$,
$\|\phi{}_{j-1,k}-\phi{}_{j-1,\infty}\|\to0$ and $\|Q_{j}(\phi{}_{j,k}-\phi{}_{j,\infty})\|_{e^{V}}\leq\|Q_{j}(e^{V})\|_{e^{V}}\|\phi{}_{j,k}-\phi{}_{j,\infty}\|_{e^{V}}\to0$,
both as $k\to\infty$, we have $Q_{j}\phi{}_{j,\infty}=\varsigma_{j-1}\phi{}_{j-1,\infty}$.
Since $g_{j}(x,y_{j})\in(0,1)$, we have $\varsigma_{j}\in(0,1)$
and using part 3) of proposition \ref{prop:L_v}, $Q_{j}\phi{}_{j,\infty}(x)>0$
for all $x$ hence $\phi{}_{j-1,\infty}(x)>0$ for all $x$. Also
$\|\phi{}_{j,\infty}\|_{e^{V}}<\infty$ implies $\phi{}_{j,\infty}(x)<\infty$
for all $x$. The membership $\phi{}_{j-1,\infty}\in C^{2}$ follows
from $Q_{j}\phi{}_{j,\infty}=\varsigma_{j-1}\phi{}_{j-1,\infty}$
together with $x\mapsto g_{j-1}(x,y_{j-1})\in C^{2}$ by assumption
\ref{assu:g} and the fact that $P_{\Delta}(x,\cdot)$ is Gaussian
with mean depending linearly on $x$. That completes the verification
of assumption \ref{assu:phi}. 

To complete the proof, observe that in order for $\mu\in\mathcal{P}_{q}$
it is sufficient that $\int(1+\|x\|^{q})e^{V(x)}\mu(\mathrm{d}x)<\infty$,
because using part 2) of proposition \ref{prop:L_v} , $\sup_{k\geq0}\|\phi{}_{k,\infty}\|_{e^{V}}<\infty$,
we have $\pi_{k-1}^{\mu}P_{\Delta}=\eta_{k}^{\mu}$ and by part 1)
of proposition \ref{prop:L_v}, $\sup_{k}\eta_{k}^{\mu}(e^{V})<\infty$.
\end{proof}
\begin{acknowledgement*}
The author thanks Anthony Lee for helpful comments.
\end{acknowledgement*}
\bibliographystyle{plain}
\bibliography{filtering}

\end{document}